\NeedsTeXFormat{LaTeX2e}
\documentclass[reqno]{amsart}
\usepackage{mathrsfs}
\usepackage{hyperref}
\usepackage{float}
\usepackage{graphicx}

\usepackage{enumerate}
\setlength{\textwidth}{16.5cm}\textheight=22.5cm\oddsidemargin=0cm\evensidemargin=0cm

\newtheorem{theorem}{Theorem}[section]
\newtheorem{lemma}[theorem]{Lemma}

\theoremstyle{definition}
\newtheorem{definition}[theorem]{Definition}

\newcommand{\norm}[1]{\left\Vert#1\right\Vert}

\numberwithin{equation}{section}
\usepackage{amsfonts}

\usepackage{amsmath}
\usepackage{amssymb}
\usepackage{cite}
\usepackage{hyperref}
\hypersetup{
   colorlinks,
    citecolor=blue,
    filecolor=black,
    linkcolor=blue,
    urlcolor=magenta
}

\begin{document}
\font\nho=cmr10
\def\dive{\mathrm{div}}
\def\cal{\mathcal}
\def\L{\cal L}

\def \ud{\underline }
\def\id{{\indent }}
\def\f{\frac}
\def\non{{\noindent}}
 \def\le{\leqslant} 
 \def\leq{\leqslant}
 \def\geq{\geqslant} 
\def\rar{\rightarrow}
\def\Rar{\Rightarrow}
\def\ti{\times}
\def\i{\mathbb I}
\def\j{\mathbb J}
\def\si{\sigma}
\def\Ga{\Gamma}
\def\ga{\gamma}
\def\ld{{\lambda}}
\def\Si{\Psi}
\def\f{\mathbf F}
\def\r{\hro{R}}
\def\e{\cal{E}}
\def\B{\cal B}
\def\A{\mathcal{A}}
\def\p{\mathbb P}

\def\tet{\theta}
\def\Tet{\Theta}
\def\hro{\mathbb}
\def\ho{\mathcal}
\def\P{\ho P}
\def\E{\mathcal{E}}
\def\n{\mathbb{N}}
\def\M{\mathbb{M}}
\def\dMu{\mathbf{U}}
\def\dMcs{\mathbf{C}}
\def\dMcu{\mathbf{C^u}}
\def\vk{\vskip 0.2cm}
\def\td{\Leftrightarrow}
\def\df{\frac}
\def\Wei{\mathrm{We}}
\def\Rey{\mathrm{Re}}
\def\s{\mathbb S}
\def\l{\mathcal{L}}
\def\C+{C_+([t_0,\infty))}
\def\o{\cal O}

\title[AAP-mild Solutions of (P-P) Keller-Segel systems]{On asymptotically almost Periodic Solutions of a chemotaxis model on bounded domains}





\author[P.T. Xuan]{Pham Truong Xuan}
\address{Pham Truong Xuan\hfill\break 
Thang Long Institute of Mathematics and Applied Sciences (TIMAS), Thang Long University, Nghiem Xuan Yem, Hanoi, Vietnam.
} 
\email{xuanpt@thanglong.edu.vn}

\begin{abstract}  
In this paper, we investigate the existence, uniqueness, and exponential decay of asymptotically almost periodic (AAP-) mild solutions for the parabolic-parabolic Keller-Segel systems on a bounded domain $\Omega \subset \mathbb{R}^n$ with a smooth boundary. First, we establish the well-posedness of mild solutions for the corresponding linear systems by utilizing the dispersive and smoothing estimates of the Neumann heat semigroup on the bounded domain $\Omega$. We then prove the existence and uniqueness of AAP-mild solutions for the linear systems by providing a Massera-type principle. Next, using results of the linear systems and fixed-point arguments, we derive the well-posedness of such solutions for the Keller-Segel systems. Finally, the exponential decay of these solutions is demonstrated through a Gronwall-type inequality.
\end{abstract}

\subjclass[2020]{35A01, 35B65, 35Q30, 43A60, 92C17}

\keywords{Chemotaxis model, Parabolic-Parabolic Keller-Segel systems,  Dispersive estimates, Smoothing estimates,  Asymptotically almost periodic mild solutions, Well-posedness, Exponential stability}

\maketitle

\tableofcontents

\section{Introduction}
{In the present paper we investigate} the parabolic-parabolic (P-P) Keller–Segel system on a bounded domain with smooth boundary $\Omega \subset \mathbb{R}^n \,\, (n \geqslant 2)$ that is described by the following equations
\begin{equation}\label{KSH} 
\left\{
  \begin{array}{rll}
u_t \!\! &= \Delta u - \nabla \cdot (u\nabla v) + g(t) \quad  & (t,x)\in \r_+ \times \Omega, \hfill \cr
v_t \!\!&= \Delta v - \gamma v + \kappa u + h(t) \quad & (t,x) \in \r_+ \times \Omega,\cr
\nabla u \cdot \nu \!\!&= \nabla v\cdot \nu =0 \quad & (t,x) \in \r_+ \times \partial\Omega,\cr
u(x,0) &= u_0(x) \quad &x\in \Omega,\cr
v(x,0) &= v_0(x) \quad &x\in \Omega,
\end{array}\right.
\end{equation}
where $\nu$ is the normal outer vector
on $\partial\Omega$, the functions $g(t,x)$ and $h(t,x)$ are given and the parameters $\gamma\geqslant 0$ and $\kappa \geqslant 0$ denote the decay and production rate of
the attractant, respectively. The unkowns of system \eqref{KSH} are $u(t,x)$ representing the density of cells and $v(t,x)$ which is the concentration of the chemoattractant.

If the function $h=0$ and the chemoattractant concentration is time independent, then one obtains a
simplified version of system \eqref{KSH} which is called the parabolic–elliptic Keller-Segel system and reads as
\begin{equation}\label{KSH'} 
\left\{
  \begin{array}{rll}
u_t \!\! &= \Delta u - \nabla \cdot (u\nabla v) + g(t) \quad  & (t,x)\in \r_+ \times \Omega, \hfill \cr
-\Delta v + \gamma v \!\!&= \kappa u \quad & (t,x) \in \r_+ \times \Omega,\cr
\nabla u \cdot \nu \!\!&= \nabla v\cdot \nu =0 \quad & (t,x) \in \r_+ \times \partial\Omega,\cr
u(x,0) &= u_0(x) \quad &x\in \Omega,\cr
v(x,0) &= v_0(x) \quad &x\in \Omega.
\end{array}\right.
\end{equation} 

The chemotaxis model introduced by Keller–Segel \cite{KeSe} which models aggregation of biological species. In particular, this model describes a situation where organisms move towards high concentration of food molecules or of a chemical secreted by themselves.
We refer the readers to some useful works concerning the well-posedness and long time behaviours of solutions for systems \eqref{KSH'} in {\cite{Bla,Chen2018,CoEs,Do,Ko1,Ko2,
Fe2021,Hi2020,Iwa2011} 
and references therein.}

We briefly recall some previous works on the Keller-Segel (P-P) systems and related systems on bounded domains with smooth boundaries in $\mathbb{R}^n$. Winkler \cite{Winker} provided the $L^p-L^q$-dispersive and smoothing estimates for the Neumann heat semigroup and employed these estimates to prove the exponential stability of mild solutions for the case $n \geq 3$. Additionally, Winkler studied finite-time blow-up in the parabolic-parabolic Keller–Segel system in higher dimensions in \cite{Win2}. In \cite{Cao}, Cao extended the previous estimates from \cite{Winker} to investigate smallness conditions on the initial data in optimal Lebesgue spaces, ensuring global boundedness and large time convergence for $n \geq 2$. Subsequently, Hao et al. \cite{Hao} provided global classical solutions to the Keller–Segel–Navier–Stokes system with matrix-valued sensitivity. Furthermore, Jiang established the global stability of Keller–Segel systems in critical Lebesgue spaces in \cite{Jiang2018} and later analyzed the global stability of homogeneous steady states in scaling-invariant spaces for a Keller–Segel–Navier–Stokes system in \cite{Jiang2020}. We also refer to other useful works, such as \cite{Win1,Win3}.

In addition, we refer the readers to some related works concerning the well-posedness and asymptotic behaviour of solutions for the parabolic-parabolic Keller-Segel systems on the whole space $\mathbb{R}^n \, (n \geqslant 2)$ in \cite{Bilet,Cal,Co2006,Sugi,Zhai,Nagai} and many references therein.

The well-posedness of mild solutions and their generalizations (such as periodic, almost periodic, asymptotically almost periodic, and pseudo almost periodic mild solutions, etc...) has been extensively studied for various parabolic and hyperbolic equations (see, for example, \cite{Dia} and references therein). In the context of chemotaxis models, we have established the existence, uniqueness, and stability of specific types of mild solutions for the parabolic-elliptic Keller-Segel systems: periodic mild solutions in \cite{Xuan2024}, almost periodic mild solutions in \cite{XuanLoan}, and asymptotically almost periodic mild solutions in \cite{XVT}. In related works on fluid dynamics \cite{XVQ, XV}, we also provided the well-posedness and exponential stability of asymptotically almost periodic (AAP-) mild solutions for the Navier-Stokes equations on real hyperbolic manifolds.
To the best of our knowledge, however, no existing work addresses the existence, uniqueness, and stability of AAP-mild solutions for the parabolic-parabolic (P-P) Keller-Segel system \eqref{KSH} on a bounded domain $\Omega \subset \mathbb{R}^n$.

In the present paper, {we address the above problems by proving first} the well-posedness of mild solutions for the linear system associated with \eqref{KSH}. Specifically, we establish well-posedness of mild solutions for the corresponding linear systems by using the $L^p-L^q$-dispersive and smoothing estimates of the Neumann heat semigroup (see Lemma \ref{Thm:linear}). We then define the solution operator and establish a Massera-type principle, demonstrating that this operator preserves the asymptotically almost periodic properties of given functions (see Theorem \ref{pest}), thereby showing that the linear system has a unique AAP-mild solution. Leveraging the well-posedness of the linear system and fixed-point arguments, we obtain the existence of AAP-mild solutions for the Keller-Segel (P-P) system \eqref{KSH} (see Theorem \ref{thm2.20} $(i)$). Finally, we establish the exponential stability of AAP- mild solutions using a Gronwall-type inequality for a double integral (see Theorem \ref{thm2.20} $(ii)$).

We note that, unlike the recent work \cite{XVT} on the existence and exponential stability of AAP mild solutions for the parabolic-elliptic Keller-Segel system, we encounter additional challenges in proving our main results due to the presence of two independent unknowns, $u$ and $v$, in system \eqref{KSH}. Moreover, since the scalar heat semigroup on real hyperbolic spaces also satisfies the $L^p-L^q$-dispersive and smoothing estimates (see for example \cite{Xuan2024}) with exponential decay rates, the results obtained in this paper appear applicable to the Keller-Segel (P-P) system in this framework as well. Together with the recent work \cite{XVT}, this paper provides comprehensive results on the global well-posedness and exponential stability of AAP-mild solutions for the Keller-Segel systems in two settings: the parabolic-parabolic system \eqref{KSH} and the parabolic-elliptic system \eqref{KSH'}. 

Our paper is organized as follows: in Section \ref{S2}, we recall some facts on the Keller-Segel (P-P) system, the $L^p-L^q$-dispersive and smoothing estimates of the Neumann heat semigroup and the concepts of almost periodic and asymptotically almost periodic functions; in Section \ref{S3} we give the proofs of well-posedness of mild solutions and AAP-mild solutions for the linear systems; in Section \ref{S4} we provided the well-posedness and exponential stability results of AAP-mild solutions for the Keller-Segel (P-P) system.

\section{The parabolic-parabolic Keller-Segel systems and concepts of functions} \label{S2}
For the sake of simplicity, we assume $\gamma=\kappa=1$, the Keller-Segel (P-P) system \eqref{KSH} on $\Omega\subset \mathbb{R}^n$ (where $n \geqslant 2$) becomes
\begin{equation}\label{KS} 
\left\{
  \begin{array}{rll}
u_t \!\! &= \Delta u - \nabla \cdot (u\nabla v) + g(t) \quad  & (t,x)\in \r_+ \times \Omega, \hfill \cr
v_t \!\!&= \Delta v - v + u + h(t) \quad & (t,x) \in \r_+ \times \Omega,\cr
\nabla u \cdot \nu \!\!&= \nabla v\cdot \nu =0 \quad & (t,x) \in \r_+ \times \partial\Omega,\cr
u(x,0) &= u_0(x) \quad &x\in \Omega,\cr
v(x,0) &= v_0(x) \quad &x\in \Omega,
\end{array}\right.
\end{equation}
where $\nu$ is the normal outer vector
on $\partial\Omega$. By employing the term of matrix, we can rewrite the system \eqref{KS} as follows
\begin{equation}\label{matrixKS} 
\left\{
  \begin{array}{rll}
\dfrac{\partial}{\partial t}\begin{bmatrix} u\\v\end{bmatrix} + \mathcal{A} \begin{bmatrix} u\\v\end{bmatrix} \!\! &= - \begin{bmatrix}\nabla \cdot (u\nabla v)\\0\end{bmatrix} + F\left(t,\begin{bmatrix}u\\v\end{bmatrix} \right) \quad  & (t,x)\in \r_+ \times \Omega, \hfill \cr
\nabla u \cdot \nu \!\!&= \nabla v\cdot \nu = 0 \quad & (t,x) \in \r_+ \times \partial\Omega,\cr
\begin{bmatrix}u(x,0) \\v(x,0)\end{bmatrix}&= \begin{bmatrix}u_0(x)\\v_0(x)\end{bmatrix} \quad &x\in \Omega,
\end{array}\right.
\end{equation}
where $\mathrm{Id}$ denotes identity operator and
\begin{equation}
\mathcal{A}= \begin{bmatrix}
-\Delta&&0\\
0&&-\Delta + \mathrm{Id}
\end{bmatrix} \hbox{   and   }F\left(t,\begin{bmatrix}u\\v\end{bmatrix} \right) = \begin{bmatrix}0\\ u \end{bmatrix} + \begin{bmatrix} g(t)\\h(t)\end{bmatrix}.
\end{equation}

We recall the dispersive and smoothing estimates of Nemann heat semigroup on the bounded domain $\Omega$ (see \cite{Cao,Winker}).
\begin{lemma}\label{Heatestimates}
Suppose $(e^{t\Delta })_{t>0}$ is the Neumann heat semigroup in $\Omega$, and let $\lambda_1 > 0$ denote the first nonzero eigenvalue of $ - \Delta$ 
in $\Omega$ under Neumann boundary conditions. Then there exist positive constants $k_1,k_2,k_3, k_4$ which only depend on $\Omega$ and we have the following estimates

\begin{itemize}
\item[$(i)$] If $1 \leq q \leq p\leq \infty$, then 
\begin{equation}\label{dispersive1}
\left\| e^{t {\Delta}}\omega\right\|_{L^p(\Omega)} \leq k_1(1+t^{- \frac{n}{2}(\frac{1}{q}-\frac{1}{p})}) e^{-\lambda_1t}\left\|\omega\right\|_{L^q(\Omega)} \text{for all }  t>0
\end{equation}
 holds for all $\omega \in L^q(\Omega)$ with $\int_{\Omega}\omega dx = 0$.

\item[$(ii)$] If $1 \leq q \leq p\leq \infty$, then 
\begin{equation}\label{dispersive2}
\left\| \nabla e^{t {\Delta}}\omega\right\|_{L^p(\Omega)} \leq k_2(1+t^{-\frac{1}{2}- \frac{n}{2}(\frac{1}{q}-\frac{1}{p})}) e^{-\lambda_1t}\left\|\omega\right\|_{L^q(\Omega)} \text{for all }  t>0
\end{equation}
 holds for all $\omega \in L^q(\Omega)$.

\item[$(iii)$] If $2 \leq q \leq p < \infty$, then 
\begin{equation}\label{dispersive3}
\left\| \nabla e^{t {\Delta}}\omega\right\|_{L^p(\Omega)} \leq k_3(1+t^{- \frac{n}{2}(\frac{1}{q}-\frac{1}{p})}) e^{-\lambda_1t}\left\|\nabla\omega\right\|_{L^q(\Omega)} \text{for all }  t>0
\end{equation}
 is true for all $\omega \in W^{1,p}(\Omega)$.

\item[$(iv)$] If $1 < q \leq p\leq \infty$, then 
\begin{equation}\label{dispersive4}
\left\| e^{t {\Delta}} \nabla \cdot \omega\right\|_{L^p(\Omega)} \leq k_4(1+t^{-\frac{1}{2}- \frac{n}{2}(\frac{1}{q}-\frac{1}{p})}) e^{-\lambda_1t}\left\|\omega\right\|_{L^q(\Omega)} \text{for all }  t>0
\end{equation}
 is valid for any $\omega \in (W^{1,p}(\Omega))^n$.
\end{itemize}
\end{lemma}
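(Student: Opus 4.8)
The plan is to combine spectral information about the Neumann Laplacian with Gaussian-type bounds on its heat kernel, treating the short-time and long-time regimes separately; since these four estimates are by now classical, one would ultimately cite \cite{Winker} and \cite{Cao}. Write $0 = \lambda_0 < \lambda_1 \le \lambda_2 \le \cdots$ for the Neumann eigenvalues of $-\Delta$ on $\Omega$, and let $P\omega := |\Omega|^{-1}\int_\Omega \omega\,dx$ denote the projection onto the constants, so that $e^{t\Delta}\omega = P\omega + e^{t\Delta}(\mathrm{Id}-P)\omega$ and the restriction of $\Delta$ to the range of $\mathrm{Id}-P$ has spectrum contained in $(-\infty,-\lambda_1]$. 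Since $\nabla$ and $\nabla\cdot$ annihilate constants, in parts $(ii)$--$(iv)$ one may harmlessly replace $\omega$ by $(\mathrm{Id}-P)\omega$; this is precisely why the factor $e^{-\lambda_1 t}$ appears in those parts without a mean-value hypothesis, whereas in part $(i)$ it comes from the standing assumption $\int_\Omega \omega\,dx = 0$.

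For the short-time range $0 < t \le 1$, I would invoke the classical Gaussian upper bounds for the Neumann heat kernel $p(t,x,y)$ on a bounded domain with smooth boundary: there exist $C,c>0$ with $0 \le p(t,x,y) \le C t^{-n/2}\exp(-c|x-y|^2/t)$ and $|\nabla_x p(t,x,y)| \le C t^{-(n+1)/2}\exp(-c|x-y|^2/t)$ for $0 < t \le 1$. Inserting these into $e^{t\Delta}\omega(x) = \int_\Omega p(t,x,y)\omega(y)\,dy$ and applying the Schur/Young test for integral kernels together with Riesz--Thorin interpolation gives $\|e^{t\Delta}\omega\|_{L^p} \lesssim t^{-\frac n2(\frac1q-\frac1p)}\|\omega\|_{L^q}$ and $\|\nabla e^{t\Delta}\omega\|_{L^p} \lesssim t^{-\frac12-\frac n2(\frac1q-\frac1p)}\|\omega\|_{L^q}$; on this range $e^{-\lambda_1 t}$ is comparable to $1$, so these match the claimed bounds, the leading $1$ in each bracket being harmless here.

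For the long-time range $t \ge 1$, I would use the semigroup law to split $e^{t\Delta} = e^{(t-1)\Delta}e^{\Delta}$ and $\nabla e^{t\Delta} = (\nabla e^{\Delta/2})\,e^{(t-1/2)\Delta}$. The factors $e^{\Delta}$ and $\nabla e^{\Delta/2}$ supply, via the $t=1$ and $t=1/2$ cases above, the ultracontractive smoothing bounds $\|e^{\Delta}\psi\|_{L^p} \lesssim \|\psi\|_{L^q}$ and $\|\nabla e^{\Delta/2}\psi\|_{L^p} \lesssim \|\psi\|_{L^q}$ with absolute constants, while the surviving factor acts on the mean-zero part and decays like $e^{-\lambda_1 t}$; exponential decay of $\|e^{s\Delta}(\mathrm{Id}-P)\|_{L^p\to L^p}$ for $p \ne 2$ reduces to the $L^2$ spectral-gap estimate by one further use of ultracontractivity and H\"older's inequality (or duality). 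Since the powers of $t$ are bounded for $t \ge 1$, the stated inequalities follow. Part $(iii)$, with its restriction $q \ge 2$, additionally uses a monotonicity property of $t \mapsto \|\nabla e^{t\Delta}\omega\|_{L^q}$ valid for $q \ge 2$, which lets one recast $(ii)$ with $\|\nabla\omega\|_{L^q}$ on the right; part $(iv)$ follows from $(ii)$ by duality, since $e^{t\Delta}\nabla\cdot$ is the adjoint of $-\nabla e^{t\Delta}$ for the $L^{p'}$--$L^p$ pairing, the boundary term vanishing because functions obeying the Neumann condition are dense in the test space.

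The main obstacle -- indeed essentially the only ingredient that is not soft functional analysis -- is the Gaussian bound for the Neumann heat kernel and its spatial gradient on a bounded smooth domain \emph{up to the boundary}, together with the $L^q$-gradient estimate behind $(iii)$; these rest on parabolic boundary regularity rather than on abstract semigroup theory, and it is for this reason that we simply cite \cite{Winker} and \cite{Cao}, where all four estimates are established in exactly the form stated. Everything else -- the splitting into constant and mean-zero parts, the short/long-time interpolation, and the duality passage to $(iv)$ -- is routine once those kernel bounds are available.
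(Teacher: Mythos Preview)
Your proposal is correct and, at bottom, takes the same approach as the paper: the paper's entire proof is the single sentence ``The proof was given in \cite[Lemma 2.1]{Cao} and \cite[Lemma 1.3]{Winker},'' and you likewise defer to those references for the hard analytic input. Your additional sketch---Gaussian kernel bounds for short time, spectral-gap decay on the mean-zero part for long time, the $q\ge 2$ gradient monotonicity for $(iii)$, and duality for $(iv)$---accurately summarizes the arguments in those cited works, so nothing is missing or in conflict.
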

\begin{proof}
The proof was given in \cite[Lemma 2.1]{Cao} and \cite[Lemma 1.3]{Winker}.
\end{proof}
To serve the next sections we recall some notions of almost periodic and asymptotically almost periodic functions. {For a deeper understanding of these functions and their properties, we refer the reader to the references \cite{Bes,Dia,Fink,Gue,Kos1,Kos2,Lit}.}

{Let $X$ be a Banach space. We denote} 
$$C_b(\r, X):=\left\{f:\r \to X \mid f\hbox{ is continuous on $\r$ and }\sup_{t\in\r}\|f(t)\|_X<\infty\right\}$$
which is a Banach space endowed with the norm $\|f\|_{\infty, X}=\|f\|_{C_b(\r, X)}:=\sup\limits_{t\in\r}\|f(t)\|_X$.
Similarly, we denote 
$$C_b(\r_+, X):=\left\{f:\r_+ \to X \mid f\hbox{ is continuous on $\r_+$ and }\sup_{t\in\r_+}\|f(t)\|_X<\infty\right\}$$
which is also a  Banach space endowed with the norm $\|f\|_{\infty, X}=\|f\|_{C_b(\r_+, X)}:=\sup\limits_{t\in\r_+}\|f(t)\|_X$.

\begin{definition}
A function  $h \in C_b(\r, X )$ is called almost periodic function if for each $ \epsilon  > 0$, there exists $l_{\epsilon}>0 $ such that every interval of length $l_{\epsilon}$ contains at least a number $T $ with the following property
\begin{equation}
 \sup_{t \in \r } \| h(t+T)  - h(t) \| < \epsilon.
\end{equation}
The collection of all almost periodic functions $h:\r \to X $ will be denoted by $AP(\r,X)$ which is a Banach space endowed with the norm $\|h\|_{ AP(\r,X)}=\sup\limits_{t\in\r}\|h(t)\|_X.$
\end{definition}
To introduce the asymptotically almost periodic functions, we need the space  $C_0 (\r_+,X)$, that is, the collection of all continuous functions $\varphi: \r_+ \to X$ such that
 $$\mathop{\lim}\limits_{t \to \infty } \| \varphi(t) \|=0.$$
Clearly, $C_0 (\r_+,X)$  is a Banach space endowed with the norm $\|\varphi\|_{C_0 (\r_+,X)}=\sup\limits_{t\in\r_+}\|\varphi(t)\|_X$.
\begin{definition} 
A function  $f \in C_b(\r_+, X )$  is said to be (forward-) asymptotically almost periodic if there exist  $h \in AP(\r,X)$ and $ \varphi\in C_0(\r_+,X)$ such that
\begin{equation}
f(t) = h(t) + \varphi(t).
\end{equation}
We denote $AAP(\r_+, X):= \{f:\r_+ \to X \mid f\hbox{ is asymptotically almost periodic on $\r_+$}\}$. Under the norm $\|f\|_{ AAP(\r_+,X)}=\|h\|_{ AP(\r, X)}+\|\varphi\|_{ C_0(\r_+,X)}$, then $AAP(\r_+,X)$ is a Banach space.
\end{definition} 
The decomposition of asymptotically almost periodic functions is unique \rm( see \cite[Proposition 3.44, page 97]{Dia}), that is, 
$$AAP(\r_+,X) = AP(\r,X) \oplus C_0(\r_+,X).$$

The asymptotically almost periodic (AAP-) functions are genelizations of and almost periodic (AP-) and periodic functions. Precisely, we have the following inclusions
$$P(\r,X) \hookrightarrow AP(\r,X) \hookrightarrow AAP(\r_+,X) \hookrightarrow C_b(\r,X).$$
where $P(\r,X)$ is the space of all  continuous and periodic functions from $\r$ to $X$.\\
{\bf Example:}
\begin{itemize}
\item[$(i)$] The function $h(t)=\sin{ t}+\sin({\sqrt{2}t})$ is almost periodic but not periodic,  $\tilde{h}(t) =\sin{ t}+\sin({\sqrt{2}t})+ \dfrac{1}{|t|}$  is asymptotically almost periodic but not almost periodic.
\item[$(ii)$] Let $X$ be a Banach space and $g\in X - \left\{ 0\right\}$, we have that $f=hg\in AP(\r,X)$ and $\tilde{f}=\tilde{h}g \in AAP(\r_+,X)$ 
\end{itemize}

\section{Asymptotically almost periodic mild solutions of linear systems}\label{S3}
For a given vector $\begin{bmatrix}\omega\\\zeta\end{bmatrix}$ we study the following inhomogeneous linear equation corresponding to origin equation \eqref{matrixKS}: 
\begin{equation}\label{Linear_matrixKS1} 
\left\{
  \begin{array}{rll}
\dfrac{\partial}{\partial t}\begin{bmatrix} u\\v\end{bmatrix} + \mathcal{A} \begin{bmatrix} u\\v\end{bmatrix} \!\! &=  - \begin{bmatrix} \nabla \cdot(\omega\nabla \zeta) \\ 0 \end{bmatrix} + \begin{bmatrix}
g(t)\\ u(t) +h(t)
\end{bmatrix} \quad  & (t,x)\in \r_+ \times \Omega, \hfill \cr
\nabla u \cdot \nu \!\!&= \nabla v\cdot \nu = 0 \quad & (t,x) \in \r_+ \times \partial\Omega.\cr
\begin{bmatrix}u(x,0) \\v(x,0)\end{bmatrix}&= \begin{bmatrix}u_0(x)\\v_0(x)\end{bmatrix} \quad &x\in \Omega
\end{array}\right.
\end{equation}

By Duhamel's principle we define the mild solution $(u,v)$ of equation \eqref{Linear_matrixKS1} as the solution of following integral equations
\begin{equation}\label{interEq1}
u(t) = e^{t\Delta}u_0 - \int_0^t e^{(t-s)\Delta} \nabla \cdot (\omega\nabla\zeta)(s)ds + \int_0^t e^{(t-s)\Delta} g(s)ds 
\end{equation}
and 
\begin{equation}\label{interEq2}
v(t) = e^{t(\Delta-\mathrm{Id})}v_0 + \int_0^t e^{(t-s)(\mathrm{\Delta}-\mathrm{Id})}u(s)ds + \int_0^t e^{(t-s)(\mathrm{\Delta}-\mathrm{Id})}h(s)ds
\end{equation}
for $u$ satisfies \eqref{interEq1}.
In the matrix form, these equations are equivalent to
\begin{equation}\label{interEq}
\begin{bmatrix}
u\\v
\end{bmatrix}(t) = e^{-t\mathcal{A}}\begin{bmatrix}
u_0\\v_0
\end{bmatrix} + \mathcal{B}\left(\begin{bmatrix}
\omega\\\zeta
\end{bmatrix}\right)(s) + \mathcal{F}\left( \begin{bmatrix}
u\\0
\end{bmatrix} \right)(s),
\end{equation}
where
\begin{equation}
\mathcal{B}\left( \begin{bmatrix}\omega\\\zeta\end{bmatrix}\right)(s) = -\int_0^t e^{-(t-s)\mathcal{A}}\begin{bmatrix} \nabla \cdot(\omega\nabla \zeta) \\ 0 \end{bmatrix}(s)  ds
\end{equation}
and
\begin{equation}
\mathcal{F}\left( \begin{bmatrix} u \\0\end{bmatrix}\right)(s) = \int_0^t e^{-(t-s)\mathcal{A}}F\left( s,\begin{bmatrix}u \\0 \end{bmatrix} \right)ds = \int_0^t e^{-(t-s)\mathcal{A}} \begin{bmatrix}
g \\ u + h
\end{bmatrix}(s)  ds.
\end{equation}

Let $2\leqslant n < p$. We consider the well-posedness of system \eqref{Linear_matrixKS1} in the following Banach space
\begin{eqnarray*}
\mathcal{X} &=& \left\{ (u,v) \in C_b(\mathbb{R}_+, L^{\frac{p}{2}}(\Omega)\times L^{\frac{p}{2}}(\Omega)),\, \nabla v \in C_b(\mathbb{R}_+,L^{p}(\Omega)) \hbox{   such that}\right.\cr
&&\left. \hbox{  the function    $t\mapsto \| u(t)\|_{L^\frac{p}{2}} + \| v(t)\|_{L^{\frac{p}{2}}} + \| \nabla v(t)\|_{L^{p}} <+\infty$   belongs to $L^\infty(\mathbb{R}_+)$}\right\}
\end{eqnarray*}
endowed with the norm
$$\| (u,v)\|_{\mathcal{X}} = \sup_{t>0} \left( \| u(t)\|_{L^{\frac{p}{2}}} + \|v(t) \|_{L^{\frac{p}{2}}} + \| \nabla v(t)\|_{L^{p}}\right).$$

The existence and uniqueness of mild solution for the linear equation \eqref{Linear_matrixKS1} in the space $\mathcal{X}$ is established in the following lemma.
\begin{lemma}\label{Thm:linear}
Let $2\leqslant n$ and $\max\{3,n \} < p$. For the initial data $u_0\in C(\overline{\Omega})\cap L_0^1(\Omega)$ and $v_0 \in C^1(\overline{\Omega})\cap L_0^1(\Omega)$ such that $\begin{bmatrix}
u_0\\v_0
\end{bmatrix} \in L^{\frac{p}{2}}(\Omega)\times L^{\frac{p}{2}}(\Omega)$, $\nabla v_0 \in L^p(\Omega)$ and given functions $\begin{bmatrix}\omega\\\zeta\end{bmatrix} \in \mathcal{X}$ and $\begin{bmatrix}g\\h \end{bmatrix} \in C_b(\mathbb{R}_+, L^{\frac{p}{2}}(\Omega)\times L^{\frac{p}{2}}(\Omega))$, there exists a unique mild solution of linear equation \eqref{Linear_matrixKS1} satisfying the integral equation \eqref{interEq}. {Moreover, the following estimate holds}
\begin{equation}\label{boundedness12}
\norm{\begin{bmatrix}u\\ v\end{bmatrix}}_{\mathcal{X}} \leqslant C_1\norm{\begin{bmatrix}
u_0\\v_0
\end{bmatrix}}_{L^{\frac{p}{2}}\times L^{\frac{p}{2}}} + C_2\| \nabla v_0\|_{L^p} + C_3\norm{\begin{bmatrix}
\omega\\\zeta
\end{bmatrix}}^2_{\mathcal{X}} +  C_4 \norm{\begin{bmatrix}
g\\h
\end{bmatrix}}_{\infty,L^{\frac{p}{2}}\times L^{\frac{p}{2}}}.
\end{equation}
\end{lemma}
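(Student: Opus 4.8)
The plan is to verify that the right-hand side of the integral equation \eqref{interEq} defines a contraction (or at least a well-defined map with the desired bound) on the Banach space $\mathcal{X}$, and then to extract uniqueness and the a priori estimate \eqref{boundedness12} from the same estimates. First I would set up the solution map componentwise. Given $(u,v)\in\mathcal{X}$, define $\Phi_1(u,v)(t)$ by the right-hand side of \eqref{interEq1} and $\Phi_2(u,v)(t)$ by the right-hand side of \eqref{interEq2} (with $u$ replaced by the current iterate). The key point is that the data $\begin{bmatrix}\omega\\\zeta\end{bmatrix}$ and $\begin{bmatrix}g\\h\end{bmatrix}$ are fixed, so the only genuinely nonlinear-looking term is the convolution with $\nabla\cdot(\omega\nabla\zeta)$, which is quadratic in the given data and contributes the $C_3\|\cdot\|_{\mathcal{X}}^2$ term; everything else is linear.

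Next I would estimate each term using Lemma \ref{Heatestimates}. For the $u$-equation: the free term $e^{t\Delta}u_0$ is controlled in $L^{p/2}$ by \eqref{dispersive1} (using $\int_\Omega u_0\,dx=0$, which holds since $u_0\in L_0^1(\Omega)$, and the embedding of the initial data into $L^{p/2}$) — here one absorbs the singular factor $t^{-\frac n2(\frac{2}{p}-\frac{?}{?})}$ by interpolating between $q=p/2$ and a smaller exponent, or more simply by taking $q=p/2=p'$ so the exponent vanishes and one just gets $k_1 e^{-\lambda_1 t}\|u_0\|_{L^{p/2}}$. For the drift term $\int_0^t e^{(t-s)\Delta}\nabla\cdot(\omega\nabla\zeta)(s)\,ds$ I would apply \eqref{dispersive4} with the inner function $\omega\nabla\zeta$: by Hölder, $\|\omega\nabla\zeta\|_{L^{q}}\le\|\omega\|_{L^{p/2}}\|\nabla\zeta\|_{L^{p}}$ with $\frac1q=\frac2p+\frac1p=\frac3p$, so $q=p/3$, and one needs $q>1$, i.e. $p>3$ — this is exactly why the hypothesis $\max\{3,n\}<p$ appears. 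Then \eqref{dispersive4} gives a kernel $(1+(t-s)^{-\frac12-\frac n2(\frac3p-\frac2p)})e^{-\lambda_1(t-s)}=(1+(t-s)^{-\frac12-\frac n{2p}})e^{-\lambda_1(t-s)}$, which is integrable in $s$ near $s=t$ precisely when $\frac12+\frac n{2p}<1$, i.e. $p>n$; combined with the exponential factor the $L^1(0,\infty)$ norm of this kernel is a finite constant $C_3$. The source term $\int_0^t e^{(t-s)\Delta}g(s)\,ds$ is handled by \eqref{dispersive1} with $q=p=p/2$, giving $\|g\|_{\infty,L^{p/2}}\int_0^\infty k_1 e^{-\lambda_1(t-s)}ds$, a constant times $\|g\|_{\infty,L^{p/2}}$.

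Then I would do the analogous estimates for $v$ and $\nabla v$, noting that the semigroup $e^{t(\Delta-\mathrm{Id})}=e^{-t}e^{t\Delta}$ carries an extra decaying factor $e^{-t}$, which only helps. For $\|v(t)\|_{L^{p/2}}$ use \eqref{dispersive1} on the free term $e^{t(\Delta-\mathrm{Id})}v_0$ and on $\int_0^t e^{(t-s)(\Delta-\mathrm{Id})}(u(s)+h(s))\,ds$, bounding $\|u(s)\|_{L^{p/2}}$ by the already-obtained bound on the $u$-component and $\|h(s)\|_{L^{p/2}}$ by $\|h\|_\infty$. For $\|\nabla v(t)\|_{L^p}$ I would split: on the free term $\nabla e^{t(\Delta-\mathrm{Id})}v_0$ apply \eqref{dispersive3} with $q=p$ to get $k_3(1+t^0)e^{-(1+\lambda_1)t}\|\nabla v_0\|_{L^p}$, yielding the $C_2\|\nabla v_0\|_{L^p}$ term; on the Duhamel terms apply \eqref{dispersive2} with the inner function $u(s)+h(s)\in L^{p/2}$, $q=p/2$, $p=p$, giving a kernel $(1+(t-s)^{-\frac12-\frac n2(\frac2p-\frac1p)})e^{-(1+\lambda_1)(t-s)}=(1+(t-s)^{-\frac12-\frac n{2p}})e^{-(1+\lambda_1)(t-s)}$, again in $L^1(0,\infty)$ since $\frac12+\frac n{2p}<1$. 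Summing the three pieces gives \eqref{boundedness12} with explicit constants $C_1,C_2,C_3,C_4$ depending only on $n,p,\Omega,\lambda_1$.

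Finally, existence and uniqueness follow by a standard fixed-point / successive-approximation argument: since the map $(u,v)\mapsto(\Phi_1,\Phi_2)(u,v)$ is affine in $(u,v)$ — the dependence on $(u,v)$ enters only linearly, through the $\int e^{(t-s)(\Delta-\mathrm{Id})}u(s)\,ds$ term in the $v$-equation — its linear part has operator norm $\le C\int_0^\infty(1+\tau^{-\frac12-\frac n{2p}})e^{-(1+\lambda_1)\tau}d\tau$ on $\mathcal{X}$; this is a finite constant but not obviously $<1$. To get a contraction one argues on a short time interval $[0,T_0]$ (where the $L^1$ norm of the kernel is small) to obtain a local solution, then bootstraps to all of $\mathbb{R}_+$ using the global a priori bound \eqref{boundedness12}, or alternatively iterates the map and shows the iterates form a Cauchy sequence by the Picard–Lindelöf-type estimate with the $n$-fold convolution kernel (whose $L^1$ norm tends to $0$). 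Continuity in $t$ (i.e. membership in $C_b$) follows from strong continuity of the Neumann heat semigroup and dominated convergence in the Duhamel integrals. I expect the main obstacle to be bookkeeping: simultaneously tracking the three quantities $\|u\|_{L^{p/2}}$, $\|v\|_{L^{p/2}}$, $\|\nabla v\|_{L^p}$ through the coupled system while keeping every Hölder exponent admissible ($q>1$ in \eqref{dispersive4}, $q\ge2$ in \eqref{dispersive3}, and the time-kernels in $L^1$), which is exactly what forces $\max\{3,n\}<p$; once the exponents are pinned down the estimates themselves are routine.
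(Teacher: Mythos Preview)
Your estimates match the paper's proof essentially line by line: the paper applies \eqref{dispersive1} and \eqref{dispersive4} to bound $\|u(t)\|_{L^{p/2}}$ (with H\"older giving $\omega\nabla\zeta\in L^{p/3}$, hence the requirement $p>3$), then \eqref{dispersive1} for $\|v(t)\|_{L^{p/2}}$, then \eqref{dispersive3} on the free term $\nabla e^{t(\Delta-\mathrm{Id})}v_0$ and \eqref{dispersive2} on the Duhamel terms for $\|\nabla v(t)\|_{L^p}$; integrability of the kernel $(1+\tau^{-\frac12-\frac{n}{2p}})e^{-\lambda_1\tau}$ on $(0,\infty)$ forces $p>n$, exactly as you identify.

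Your final paragraph, however, is unnecessarily elaborate and reflects a small structural oversight. The system \eqref{interEq1}--\eqref{interEq2} is \emph{triangular}, not a genuine fixed-point problem: the right-hand side of \eqref{interEq1} involves only the data $(u_0,\omega,\zeta,g)$ and not $(u,v)$, so $u$ is given outright by an explicit formula; once $u$ is known, \eqref{interEq2} is again an explicit formula for $v$ in terms of $(v_0,u,h)$. No contraction, short-time continuation, or Picard iteration is needed, and this is why the paper, after deriving the three estimates \eqref{estu}, \eqref{estv}, \eqref{estderv}, simply states that existence follows from the boundedness and that ``the uniqueness holds clearly.'' Your concern about whether the linear part has operator norm $<1$ dissolves once you notice there is no self-map to contract: first compute $u$, then plug it into the formula for $v$.
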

\begin{proof} 
Using the estimates \eqref{dispersive1} and \eqref{dispersive4} in Lemma \ref{Heatestimates} we have
\begin{eqnarray}\label{estu}
\norm{u(t)}_{L^{\frac{p}{2}}} &\leqslant& \norm{e^{t\Delta}u_0}_{L^{\frac{p}{2}}} + \int_0^t \norm{e^{(t-s)\Delta}\nabla\cdot(\omega\nabla\zeta)(s)}_{L^{\frac{p}{2}}}ds + \int_0^t \norm{e^{(t-s)\Delta}g(s)}_{L^{\frac{p}{2}}}ds\cr
&\leqslant& k_1e^{-\lambda_1 t}\norm{u_0}_{L^{\frac{p}{2}}} + k_4\int_0^t \left( 1 + (t-s)^{-\frac{1}{2}-\frac{n}{2p}} \right)  e^{-\lambda_1(t-s)} \norm{\omega(s)\nabla\zeta(s)}_{L^{\frac{p}{3}}}ds\cr
&&+ k_1\int_0^t e^{-\lambda_1 (t-s)}\norm{g(s)}_{L^{\frac{p}{2}}}ds\cr
&\leqslant& k_1\norm{u_0}_{L^{\frac{p}{2}}} + k_4 \norm{\omega}_{\infty,L^{\frac{p}{2}}}\norm{\nabla\zeta}_{\infty,L^{p}} \int_0^{+\infty} \left( 1 + z^{-\frac{1}{2}-\frac{n}{2p}} \right)  e^{-\lambda_1 z} dz\cr
&&+ k_1\norm{g}_{\infty,L^{\frac{p}{2}}}\int_0^{+\infty} e^{-\lambda_1 z}dz\cr
&\leqslant& k_1\norm{u_0}_{L^{\frac{p}{2}}} + k_4 C \norm{\omega}_{\infty,L^{\frac{p}{2}}}\norm{\nabla\zeta}_{\infty,L^{p}} + \frac{k_1}{\lambda_1}\norm{g}_{\infty,L^{\frac{p}{2}}},
\end{eqnarray}
where $C = \left( \frac{1}{\lambda_1} + \lambda_1^{-\frac{1}{2}+\frac{n}{2p}}{\bf \Gamma}\left( \frac{1}{2} - \frac{n}{2p} \right) \right) <+\infty$ since the boundedness of Gamma function ${\bf \Gamma}\left( \frac{1}{2} - \frac{n}{2p} \right)$ provided that $p>n$.

By the same way as above we can estimate
\begin{eqnarray}\label{estv}
\norm{v(t)}_{L^{\frac{p}{2}}} &\leqslant& \norm{e^{t(\Delta-\mathrm{Id})}v_0}_{L^{\frac{p}{2}}} + \int_0^t \norm{e^{(t-s)(\Delta-\mathrm{Id})}u(s)}_{L^{\frac{p}{2}}}ds + \int_0^t \norm{e^{(t-s)(\Delta-\mathrm{Id})}h(s)}_{L^{\frac{p}{2}}}ds\cr
&\leqslant& k_1 e^{-(\lambda_1+1)t} \norm{v_0}_{L^{\frac{p}{2}}} + k_1\int_0^t e^{-(\lambda_1+1)(t-s)} \norm{u(s)}_{L^{\frac{p}{2}}}ds \cr
&&+ k_1\int_0^t e^{-(\lambda_1+1)(t-s)}\norm{h(s)}_{L^{\frac{p}{2}}}ds\cr
&\leqslant& k_1 \norm{v_0}_{L^{\frac{p}{2}}} + \frac{k_1}{\lambda_1+1} \left(\| u\|_{\infty,L^{\frac{p}{2}}} + \norm{h}_{\infty,L^{\frac{p}{2}}}\right)\cr
&\leqslant& \max\left\{k_1,\frac{k_1^2}{\lambda_1+1} \right\} \norm{\begin{bmatrix}u_0\\v_0 \end{bmatrix}}_{L^{\frac{p}{2}}\times L^{\frac{p}{2}}} + \frac{k_1k_4C}{\lambda_1+1}\norm{\omega}_{\infty,L^{\frac{p}{2}}}\norm{\nabla\zeta}_{\infty,L^{p}} \cr
&&+ \max\left\{ \frac{k_1^2}{\lambda_1(\lambda_1+1)}, \frac{k_1}{\lambda_1+1}\right\} \norm{\begin{bmatrix}g\\h \end{bmatrix}}_{\infty,L^{\frac{p}{2}}\times L^{\frac{p}{2}}},
\end{eqnarray}
where the last estimate holds by using \eqref{estu}. 

Moreover, by using estimates \eqref{dispersive2} and \eqref{dispersive3} we have
\begin{eqnarray}\label{estderv}
\norm{\nabla v(t)}_{L^{p}} &\leqslant& \norm{\nabla e^{t(\Delta-\mathrm{Id})}v_0}_{L^{p}} + \int_0^t \norm{\nabla e^{(t-s)(\Delta-\mathrm{Id})} u(s)}_{L^{p}}ds + \int_0^t \norm{\nabla e^{(t-s)(\Delta-\mathrm{Id})}h(s)}_{L^{p}}ds\cr
&\leqslant& k_3 e^{-(\lambda_1+1)t} \norm{\nabla v_0}_{L^{p}} + k_2\int_0^t \left( 1 + (t-s)^{-\frac{1}{2}-\frac{n}{2p}} \right)e^{-(\lambda_1+1)(t-s)} \norm{u(s)}_{L^{\frac{p}{2}}}ds \cr
&&+ k_2\int_0^t \left( 1 + (t-s)^{-\frac{1}{2}-\frac{n}{2p}} \right) e^{-(\lambda_1+1)(t-s)}\norm{h(s)}_{L^{\frac{p}{2}}}ds \cr
&\leqslant& k_3 \norm{v_0}_{L^{\frac{p}{2}}} + k_2\left(\frac{1}{\lambda_1+1} + \lambda_1^{\frac{1}{2}-\frac{n}{2p}}{\bf \Gamma}\left( \frac{1}{2} - \frac{n}{2p} \right) \right)\left(\norm{u}_{\infty,L^{\frac{p}{2}}} + \norm{h}_{\infty,L^{\frac{p}{2}}}\right)\cr
&\leqslant& k_3 \norm{v_0}_{L^{\frac{p}{2}}} + k_2C''\left(\norm{u}_{\infty,L^{\frac{p}{2}}} + \norm{h}_{\infty,L^{\frac{p}{2}}}\right)\cr
&&\hbox{(where   } C''=\left(\frac{1}{\lambda_1+1} + \lambda_1^{-\frac{1}{2}+\frac{n}{2p}}{\bf \Gamma}\left( \frac{1}{2} - \frac{n}{2p} \right) \right))\cr
&\leqslant& \max\left\{ k_3, k_1k_2C''  \right\}\norm{\begin{bmatrix}
u_0\\v_0
\end{bmatrix}}_{L^{\frac{p}{2}}\times L^{\frac{p}{2}}} + k_2k_4C''C\norm{\omega}_{\infty,L^{\frac{p}{2}}}\norm{\nabla\zeta}_{\infty,L^p}\cr
&&+ \max\left\{ \frac{k_1k_2C''}{\lambda_1},k_2C'' \right\}\norm{\begin{bmatrix}
g\\h
\end{bmatrix}}_{L^{\frac{p}{2}}\times L^{\frac{p}{2}}},
\end{eqnarray}
where the last estimate holds by using again \eqref{estu}.

Combining inequalities \eqref{estu}, \eqref{estv} and \eqref{estderv} we obtain the boundedness \eqref{boundedness12} which leads to the existence of mild solution of \eqref{Linear_matrixKS1}. The uniqueness holds clearly.

\end{proof} 
As a direct consequence of Lemma \ref{Thm:linear}, we can define the solution operator $S: \mathcal{X}\to \mathcal{X}$ associating with linear equation \eqref{Linear_matrixKS1} as
\begin{align*}
S: \mathcal{X} &\rightarrow \mathcal{X}\cr
\begin{bmatrix}\omega\\\zeta\end{bmatrix}&\mapsto S\left(\begin{bmatrix}\omega\\\zeta\end{bmatrix}\right),
\end{align*}
where $S(\omega,\zeta)$ is a unique solution of integral equation \eqref{interEq}, i.e, mild solution of \eqref{Linear_matrixKS1}.

We state and prove the existence and uniqueness of AAP-mild solutions for linear equation \eqref{Linear_matrixKS1} in the following theorem.
\begin{theorem}\label{pest}
Let $2\leqslant n $ and $\max\{ 3,n\}<p$. For the initial data $u_0\in C(\overline{\Omega})\cap L_0^1(\Omega)$ and $v_0 \in C^1(\overline{\Omega})\cap L_0^1(\Omega)$ such that $\begin{bmatrix}
u_0\\v_0
\end{bmatrix} \in L^{\frac{p}{2}}(\Omega)\times L^{\frac{p}{2}}(\Omega)$, $\nabla v_0 \in L^p(\Omega)$ and a given AAP-function $t\mapsto (\omega,\zeta,g,h)(t)$ with respect on the norm 
$$|\norm{(\omega,\zeta,g,h)(t)}| = \norm{\omega(t)}_{L^{\frac{p}{2}}} + \norm{\zeta(t)}_{L^{\frac{p}{2}}} + \norm{\nabla\zeta(t)}_{L^p} + \norm{g(t)}_{L^{\frac{p}{2}}} + \norm{h(t)}_{L^{\frac{p}{2}}},$$
there exists a unique AAP-mild solution of linear equation \eqref{Linear_matrixKS1} satisfying the integral equation \eqref{interEq}.
\end{theorem}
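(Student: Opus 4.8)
The plan is to decompose the data into its almost periodic and vanishing parts and handle each separately, using a Massera-type argument on the almost periodic part and a direct decay estimate on the remainder. Write the given AAP-function as $(\omega,\zeta,g,h) = (\omega^{ap},\zeta^{ap},g^{ap},h^{ap}) + (\omega^0,\zeta^0,g^0,h^0)$ with the first summand in $AP(\mathbb{R},\cdot)$ and the second in $C_0(\mathbb{R}_+,\cdot)$; this decomposition is unique by \cite[Proposition 3.44]{Dia}. By Lemma \ref{Thm:linear}, the full integral equation \eqref{interEq} has a unique mild solution $(u,v)\in\mathcal{X}$; the point is to show $(u,v)\in AAP(\mathbb{R}_+,\cdot)$ with respect to the stated norm. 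Because of the coupling — $v$ is built from $u$ via \eqref{interEq2} — I would first prove the $u$-component is AAP, then feed this into the $v$-equation.

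First I would treat the purely almost periodic situation. Consider the equation \eqref{Linear_matrixKS1} on the whole line with data $(\omega^{ap},\zeta^{ap},g^{ap},h^{ap})\in AP(\mathbb{R},\cdot)$, and define the candidate bounded mild solution on $\mathbb{R}$ by the Duhamel formulas with lower limit $-\infty$:
\begin{equation*}
u^{ap}(t) = -\int_{-\infty}^t e^{(t-s)\Delta}\nabla\cdot(\omega^{ap}\nabla\zeta^{ap})(s)\,ds + \int_{-\infty}^t e^{(t-s)\Delta} g^{ap}(s)\,ds,
\end{equation*}
\begin{equation*}
v^{ap}(t) = \int_{-\infty}^t e^{(t-s)(\Delta-\mathrm{Id})}u^{ap}(s)\,ds + \int_{-\infty}^t e^{(t-s)(\Delta-\mathrm{Id})}h^{ap}(s)\,ds.
\end{equation*}
The exponential factors $e^{-\lambda_1(t-s)}$ (respectively $e^{-(\lambda_1+1)(t-s)}$) in Lemma \ref{Heatestimates} together with the integrability of $(1+z^{-1/2-n/(2p)})e^{-\lambda_1 z}$ for $p>n$ — exactly the constant $C$ appearing in \eqref{estu} — make these integrals absolutely convergent and bounded in $\mathcal{X}$-norm. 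To see that $(u^{ap},v^{ap})$ is almost periodic: for a translation number $T$ of the data, substitute $s\mapsto s-T$ in the integrals and use the dispersive/smoothing estimates to bound $\||(u^{ap}(t+T)-u^{ap}(t),\ldots)\||$ by a constant times $\sup_s\||(\omega^{ap}(s+T)-\omega^{ap}(s),\ldots)\||$ plus a cross term controlled via $\|\omega^{ap}(\cdot+T)-\omega^{ap}\|_\infty\|\nabla\zeta^{ap}\|_\infty + \|\omega^{ap}\|_\infty\|\nabla\zeta^{ap}(\cdot+T)-\nabla\zeta^{ap}\|_\infty$; thus every $\epsilon/(\text{const})$-almost-period of the data is an $\epsilon$-almost-period of $(u^{ap},v^{ap})$, so $(u^{ap},v^{ap})\in AP(\mathbb{R},\cdot)$. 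This is the Massera-type step.

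Next I would show that $(u,v) - (u^{ap},v^{ap})|_{\mathbb{R}_+} \in C_0(\mathbb{R}_+,\cdot)$. Their difference $(w,z)$ satisfies \eqref{interEq1}–\eqref{interEq2} with forcing data $(\omega^0,\zeta^0,g^0,h^0)$ (plus bilinear cross terms $\omega^{ap}\nabla\zeta^0$, $\omega^0\nabla\zeta^{ap}$, $\omega^0\nabla\zeta^0$, which are themselves in $C_0$), and initial data $(u_0-u^{ap}(0),\, v_0-v^{ap}(0))$. For the initial-data contribution, $\|e^{t\Delta}(u_0-u^{ap}(0))\|_{L^{p/2}}\le k_1 e^{-\lambda_1 t}\|u_0-u^{ap}(0)\|_{L^{p/2}}\to 0$ and similarly for the $v_0$ and $\nabla v_0$ terms. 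For the Duhamel integrals of a $C_0$ forcing $\psi$, split $\int_0^t = \int_0^{t/2} + \int_{t/2}^t$: on $[0,t/2]$ the kernel decay $e^{-\lambda_1(t-s)}\le e^{-\lambda_1 t/2}$ kills the (bounded) integral, while on $[t/2,t]$ one has $\sup_{s\ge t/2}\|\psi(s)\|\to 0$ and the kernel is still integrable, so the whole thing tends to $0$; the same argument with $e^{-(\lambda_1+1)(t-s)}$ handles the $v$-component and, after using that $w\in C_0$, the coupling integral $\int_0^t e^{(t-s)(\Delta-\mathrm{Id})}w(s)\,ds$. Combining, $(u,v) = (u^{ap},v^{ap})|_{\mathbb{R}_+} + (w,z)$ is the required AAP-decomposition, and uniqueness of the AAP-mild solution follows from the uniqueness already established in Lemma \ref{Thm:linear} together with uniqueness of the $AP\oplus C_0$ decomposition.

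The main obstacle is the coupling between $u$ and $v$: one cannot invoke a single-equation Massera principle but must propagate the AAP property through the chain \eqref{interEq1} $\Rightarrow$ \eqref{interEq2}, and care is needed because the bilinear term $\nabla\cdot(\omega\nabla\zeta)$ mixes the $AP$ and $C_0$ parts of the data, so one must verify that each mixed product lands in the correct summand (products $AP\times AP\subset AP$, and anything involving a $C_0$ factor is in $C_0$, using boundedness of the other factor in the relevant $L^q$-norm). The quantitative estimates are all instances of \eqref{estu}–\eqref{estderv} and require no new ideas beyond Lemma \ref{Heatestimates}.
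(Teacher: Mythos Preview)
Your proposal is correct and follows essentially the same approach as the paper: decompose the data into its $AP$ and $C_0$ parts, define the almost periodic candidate $(u^{ap},v^{ap})$ via the Duhamel integrals from $-\infty$, verify almost periodicity by the translation argument using the dispersive/smoothing bounds of Lemma~\ref{Heatestimates}, and then show the remainder lies in $C_0$ by the $[0,t/2]\cup[t/2,t]$ splitting; your identification of the bilinear cross terms and the $u\to v$ coupling matches the paper's Step~1/Step~2 structure exactly.
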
 
\begin{proof} 
This theorem is in fact a Massera-type principle for AAP-mild solutions of parabolic-parabolic Keller-Segel systems (see the similar subjects for parabolic-elliptic Keller-Segel systems in \cite{Xuan2024,XVT, XuanLoan}). In particular, we need to prove that the solution operator $S$ preserves the asymptotically almost periodic property of functions $\begin{bmatrix}
\omega\\\zeta \end{bmatrix}$ and $\begin{bmatrix}
g\\h\end{bmatrix}$. Indeed, we have
\begin{equation}\label{SO}
S\left(\begin{bmatrix}\omega\\\zeta\end{bmatrix}\right) = \begin{bmatrix}
u\\v
\end{bmatrix}
\end{equation}
which is a unique solution of integral equation \eqref{Linear_matrixKS1} which means that the functions $u$ and $v$ satisfy equations \eqref{interEq1} and \eqref{interEq2}, respectively.

By hypothesis, we can decompose
\begin{equation}\label{decom1}
\begin{bmatrix}
\omega\\\zeta\\g\\h \end{bmatrix} = \begin{bmatrix}
\omega_1\\\zeta_1\\g_1\\h_1 \end{bmatrix}+ \begin{bmatrix}
\omega_2\\\zeta_2\\g_2\\h_2 \end{bmatrix}
\end{equation}
for $(\omega_1,\zeta_1,g_1,h_1)$ is almost periodic in $C_b(\mathbb{R},L^{\frac{p}{2}}(\Omega)\times L^{\frac{p}{2}}(\Omega)\times L^{\frac{p}{2}}(\Omega)\times L^{\frac{p}{2}}(\Omega) )$ with respect to the norm
\begin{equation}\label{A}
\left|\norm{\begin{bmatrix}\omega_1\\\zeta_1\\g_1\\h_1\end{bmatrix}(t)}\right|= \norm{\omega_1(t)}_{L^{\frac{p}{2}}} + \norm{\zeta_1(t)}_{L^{\frac{p}{2}}} + \norm{\nabla\zeta_1(t)}_{L^p} + \norm{g_1(t)}_{L^{\frac{p}{2}}} + \norm{h_1(t)}_{L^{\frac{p}{2}}},\, t\in \mathbb{R}
\end{equation}
and $(\omega_2,\zeta_2,g_2,h_2)$ satifying
\begin{equation}\label{limitCD}
\lim_{t\to +\infty}\left|\norm{\begin{bmatrix}
\omega_2\\\zeta_2\\g_2\\h_2 \end{bmatrix}(t)}\right| = 0.
\end{equation}
Inserting \eqref{decom1} into \eqref{interEq1}, we obtain
\begin{eqnarray}\label{decom3}
u(t) &=& e^{t\Delta}u_0 - \int_0^t e^{(t-s)\Delta}\nabla \cdot[(\omega_1+\omega_2)\nabla(\zeta_1+\zeta_2)](s)ds + \int_0^t e^{(t-s)\Delta}(g_1+g_2)(s)ds\cr
&=& - \int_{-\infty}^te^{(t-s)\Delta}\nabla \cdot(\omega_1\nabla\zeta_1)(s)ds + \int_{-\infty}^t e^{(t-s)\Delta}g_1(s)ds\cr
&&+ e^{t\Delta}u_0 + \int_{-\infty}^0 e^{(t-s)\Delta}\nabla \cdot(\omega_1\nabla\zeta_1)(s)ds - \int_{-\infty}^0 e^{(t-s)\Delta}g_1(s)ds\cr
&& - \int_0^t e^{(t-s)\Delta}\nabla \cdot[\omega_1\nabla\zeta_2 + \omega_2\nabla\zeta](s)ds + \int_0^t e^{(t-s)\Delta}g_2(s)ds\cr
&=& u_1(t) + u_2(t),
\end{eqnarray}
where
\begin{eqnarray}
u_1(t) &=&- \int_{-\infty}^te^{(t-s)\Delta}\nabla \cdot(\omega_1\nabla\zeta_1)(s)ds + \int_{-\infty}^t e^{(t-s)\Delta}g_1(s)ds
\end{eqnarray}
and
\begin{eqnarray}\label{u2}
u_2(t) &=& e^{t\Delta}u_0 + \int_{-\infty}^0 e^{(t-s)\Delta}\nabla \cdot(\omega_1\nabla\zeta_1)(s)ds - \int_{-\infty}^0 e^{(t-s)\Delta}g_1(s)ds\cr
&& - \int_0^t e^{(t-s)\Delta}\nabla \cdot[\omega_1\nabla\zeta_2 + \omega_2\nabla\zeta](s)ds + \int_0^t e^{(t-s)\Delta}g_2(s)ds\cr
&=& e^{t\Delta}u_0 + \int_t^{+\infty} e^{z\Delta}\nabla \cdot(\omega_1\nabla\zeta_1)(t-z)dz - \int_t^{+\infty} e^{z\Delta}g_1(t-z)dz\cr
&& - \int_0^t e^{(t-s)\Delta}\nabla \cdot[\omega_1\nabla\zeta_2 + \omega_2\nabla\zeta](s)ds + \int_0^t e^{(t-s)\Delta}g_2(s)ds.
\end{eqnarray}
Inserting \eqref{decom1} and \eqref{decom3} into \eqref{interEq2}, we get
\begin{eqnarray}
v(t) &=& e^{t(\Delta-\mathrm{Id})}v_0 + \int_0^t e^{(t-s)(\Delta-\mathrm{Id})}(u_1+u_2)(s)ds + \int_0^t e^{(t-s)(\Delta-\mathrm{Id})}(h_1+h_2)(s)ds\cr
&=& \int_{-\infty}^t e^{(t-s)(\Delta-\mathrm{Id})}u_1(s)ds + \int_{-\infty}^t e^{(t-s)(\Delta-\mathrm{Id})}h_1(s)ds\cr
&& + e^{t(\Delta-\mathrm{Id})}v_0 - \int_{-\infty}^0 e^{(t-s)(\Delta-\mathrm{Id})}u_1(s)ds - \int_{-\infty}^0 e^{(t-s)(\Delta-\mathrm{Id})}h_1(s)ds\cr
&&+ \int_0^t e^{(t-s)(\Delta-\mathrm{Id})}u_2(s)ds + \int_0^t e^{(t-s)(\Delta-\mathrm{Id})}h_2(s)ds\cr
&=& v_1(t) + v_2(t),
\end{eqnarray}
where
\begin{equation}
v_1(t) = \int_{-\infty}^t e^{(t-s)(\Delta-\mathrm{Id})}u_1(s)ds + \int_{-\infty}^t e^{(t-s)(\Delta-\mathrm{Id})}h_1(s)ds
\end{equation}
and
\begin{eqnarray}\label{v2}
v_2(t) &=& e^{t(\Delta-\mathrm{Id})}v_0 - \int_{-\infty}^0 e^{(t-s)(\Delta-\mathrm{Id})}u_1(s)ds - \int_{-\infty}^0 e^{(t-s)(\Delta-\mathrm{Id})}h_1(s)ds\cr
&&+ \int_0^t e^{(t-s)(\Delta-\mathrm{Id})}u_2(s)ds + \int_0^t e^{(t-s)(\Delta-\mathrm{Id})}h_2(s)ds\cr
&=& e^{t(\Delta-\mathrm{Id})}v_0 - \int_t^{+\infty} e^{z(\Delta-\mathrm{Id})}u_1(t-z)dz - \int_t^{+\infty} e^{z(\Delta-\mathrm{Id})}h_1(t-z)dz\cr
&&+ \int_0^t e^{(t-s)(\Delta-\mathrm{Id})}u_2(s)ds + \int_0^t e^{(t-s)(\Delta-\mathrm{Id})}h_2(s)ds.
\end{eqnarray}

Now, we prove that the function $\begin{bmatrix}
u_1\\v_1 \end{bmatrix}$ is almost periodic in $\mathcal{X}$ with repsect to the norm 
\begin{equation}
\norm{\begin{bmatrix}
u\\v
\end{bmatrix}(t)} = \norm{u(t)}_{L^{\frac{p}{2}}} + \norm{v(t)}_{L^{\frac{p}{2}}} + \norm{\nabla v(t)}_{L^p},\, t\in \mathbb{R}
\end{equation}
and the function $\begin{bmatrix}
u_2\\v_2 \end{bmatrix}$ satisfying that
\begin{equation}\label{limitPAP}
\lim_{t\to +\infty}\norm{\begin{bmatrix}
u_2\\v_2 \end{bmatrix}(t)} = 0.
\end{equation}
Combining these with equation \eqref{SO}, we complete the proof. In particular, we establish the above properties by the following two steps:

{\bf Step 1:} Since the function  $(\omega_1,\zeta_1,g_1,h_1)$ is almost periodic in $C_b(\mathbb{R},L^{\frac{p}{2}}(\Omega)\times L^{\frac{p}{2}}(\Omega)\times L^{\frac{p}{2}}(\Omega)\times L^{\frac{p}{2}}(\Omega) )$ with respect to the norm \eqref{A}, we have that: for each $ \epsilon  > 0$, there exists $l_{\epsilon}>0 $ such that every interval of length $l_{\epsilon}$ contains at least a number $T $ satisfying
\begin{eqnarray}\label{almost1}
&&\norm{\omega_1(t+T)-\omega_1(t)}_{L^{\frac{p}{2}}} + \norm{\zeta_1(t+T)-\zeta_1(t)}_{L^{\frac{p}{2}}} + \norm{\nabla\zeta_1(t+T)-\nabla\zeta_1(t)}_{L^p} \cr
&&+ \norm{g_1(t+T)-g_1(t)}_{L^{\frac{p}{2}}} + \norm{h_1(t+T)-h_1(t)}_{L^{\frac{p}{2}}}<\epsilon,\, t\in \mathbb{R}.
\end{eqnarray}
Using \eqref{almost1} and the similar estimates as \eqref{estu} in the proof of Lemma \ref{Thm:linear} we can obtain that
\begin{eqnarray}\label{Apest1}
&&\norm{u_1(t+T)-u_1(t)}_{L^{\frac{p}{2}}}\cr
&\leqslant&  \norm{\int_{-\infty}^{t+T} e^{(t+T-s)\Delta}\nabla\cdot[(\omega_1\nabla\zeta_1)(s)]ds +\int_{-\infty}^{t} e^{(t-s)\Delta}\nabla\cdot[(\omega_1\nabla\zeta_1)(s)]ds }_{L^{\frac{p}{2}}} \cr
&&+ \norm{\int_{-\infty}^{t+T} e^{(t+T-s)\Delta}g_1(s)ds - \int_{-\infty}^{t} e^{(t-s)\Delta}g_1(s)ds}_{L^{\frac{p}{2}}}\cr &\leqslant&  \int_{-\infty}^t \norm{e^{(t-s)\Delta}\nabla\cdot[(\omega_1\nabla\zeta_1)(s+T)-(\omega_1\nabla\zeta_1)(s)]}_{L^{\frac{p}{2}}}ds \cr
&&+ \int_{-\infty}^t \norm{e^{(t-s)\Delta}(g_1(s+T)-g_1(s))}_{L^{\frac{p}{2}}}ds\cr
&\leqslant&  k_4\int_{-\infty}^t \left( 1 + (t-s)^{-\frac{1}{2}-\frac{n}{2p}} \right)  e^{-\lambda_1(t-s)} \left(\norm{(\omega_1(s+T)-\omega_1(s))\nabla\zeta(s+T)}_{L^{\frac{p}{3}}}  \right.\cr
&&\hspace{8cm}\left.+ \norm{\omega_1(s)\nabla(\zeta(s+T)-\zeta_1(s))}_{L^{\frac{p}{3}}} \right)ds\cr
&&+ k_1\int_{-\infty}^t e^{-\lambda_1 (t-s)}\norm{g(s+T)-g(s)}_{L^{\frac{p}{2}}}ds\cr
&\leqslant&  k_4 \norm{\omega_1(\cdot+T)-\omega_1(\cdot)}_{\infty,L^{\frac{p}{2}}}\norm{\nabla\zeta_1}_{\infty,L^{p}} \int_0^{+\infty} \left( 1 + z^{-\frac{1}{2}-\frac{n}{2p}} \right)  e^{-\lambda_1 z} dz\cr
&&+ k_4 \norm{\omega_1}_{\infty,L^{\frac{p}{2}}}\norm{\nabla\zeta_1(\cdot+T)-\nabla\zeta_1(\cdot)}_{\infty,L^{p}} \int_0^{+\infty} \left( 1 + z^{-\frac{1}{2}-\frac{n}{2p}} \right)  e^{-\lambda_1 z} dz\cr
&&+ k_1\norm{g(\cdot+T)-g(\cdot)}_{\infty,L^{\frac{p}{2}}}\int_0^{+\infty} e^{-\lambda_1 z}dz\cr
&\leqslant& k_4 C \left(\norm{\omega_1}_{\infty,L^{\frac{p}{2}}}+\norm{\nabla\zeta_1}_{\infty,L^{p}} + \frac{k_1}{\lambda_1} \right)\epsilon,
\end{eqnarray}
where the constant $C$ is given as in the proof of Lemma \ref{Thm:linear}. By the same way as \eqref{estv} in the proof of Lemma \ref{Thm:linear} we have
\begin{eqnarray}\label{Apest2}
&&\norm{v_1(t+T)-v_1(t)}_{L^{\frac{p}{2}}}\cr
&\leqslant& \norm{\int_{-\infty}^{t+T} e^{(t+T-s)(\Delta-\mathrm{Id})}u_1(s) - \int_{-\infty}^{t} e^{(t-s)(\Delta-\mathrm{Id})}u_1(s)}_{L^{\frac{p}{2}}}ds \cr
&&+ \norm{\int_{-\infty}^{t+T} e^{(t+T-s)(\Delta-\mathrm{Id})}h_1(s)ds - \int_{-\infty}^{t} e^{(t-s)(\Delta-\mathrm{Id})}h_1(s)ds }_{L^{\frac{p}{2}}}\cr
&\leqslant& \norm{\int_{-\infty}^{t} e^{(t-s)(\Delta-\mathrm{Id})}(u_1(s+T) - u_1(s))}_{L^{\frac{p}{2}}}ds + \norm{\int_{-\infty}^{t} e^{(t-s)(\Delta-\mathrm{Id})}(h_1(s+T)-h_1(s))ds}_{L^{\frac{p}{2}}}\cr
&\leqslant&  k_1\int_{-\infty}^t e^{-(\lambda_1+1)(t-s)} \norm{u_1(s+T)-u_1(s)}_{L^{\frac{p}{2}}}ds + k_1\int_{-\infty}^t e^{-(\lambda_1+1)(t-s)}\norm{h_1(s+T)-h_1(s)}_{L^{\frac{p}{2}}}ds\cr
&\leqslant& \frac{k_1}{\lambda_1+1} \left(\| u_1(\cdot+T)-u_1(\cdot)\|_{\infty,L^{\frac{p}{2}}} + \norm{h_1(\cdot+T)-h_1(\cdot)}_{\infty,L^{\frac{p}{2}}}\right)\cr
&\leqslant& \left\{\frac{k_1k_4C}{\lambda_1+1}\left(\norm{\omega_1}_{\infty,L^{\frac{p}{2}}}+\norm{\nabla\zeta_1}_{\infty,L^{p}}\right) +\frac{k_1}{\lambda_1+1} \right\}\epsilon,
\end{eqnarray}
{where the last holds} by using estimation \eqref{Apest1} and inequality \eqref{almost1}. Moreover, {we also have} that
\begin{eqnarray}\label{Apest3}
&&\norm{\nabla v_1(t+T)-\nabla v_1(t)}_{L^{p}} \cr
&\leqslant&  \norm{\int_{-\infty}^{t+T} \nabla e^{(t+T-s)(\Delta-\mathrm{Id})} u_1(s)ds - \int_{-\infty}^{t} \nabla e^{(t-s)(\Delta-\mathrm{Id})} u_1(s)ds}_{L^{p}}\cr
&& + \norm{ \int_{-\infty}^{t+T} \nabla e^{(t+T-s)(\Delta-\mathrm{Id})}h_1(s)ds - \int_{-\infty}^{t} \nabla e^{(t-s)(\Delta-\mathrm{Id})}h_1(s)ds}_{L^{p}}\cr
&\leqslant&  \int_{-\infty}^t \norm{\nabla e^{(t-s)(\Delta-\mathrm{Id})} (u_1(s+T)-u_1(s))}_{L^{p}}ds + \int_{-\infty}^t \norm{\nabla e^{(t-s)(\Delta-\mathrm{Id})}(h_1(s+T)-h_1(s))}_{L^{p}}ds\cr
&\leqslant& k_2\int_{-\infty}^t \left( 1 + (t-s)^{-\frac{1}{2}-\frac{n}{2p}} \right)e^{-(\lambda_1+1)(t-s)} \norm{u_1(s+T)-u_1(s)}_{L^{\frac{p}{2}}}ds \cr
&&+ k_2\int_{-\infty}^t \left( 1 + (t-s)^{-\frac{1}{2}-\frac{n}{2p}} \right) e^{-(\lambda_1+1)(t-s)}\norm{h_1(s+T)-h_1(s)}_{L^{\frac{p}{2}}}ds \cr
&\leqslant& k_2C''\left(\norm{u_1(\cdot+T)-u_1(\cdot)}_{\infty,L^{\frac{p}{2}}} + \norm{h_1(\cdot+T)-h_1(\cdot)}_{\infty,L^{\frac{p}{2}}}\right)\cr
&\leqslant& \left\{k_2k_4CC''\left(\norm{\omega_1}_{\infty,L^{\frac{p}{2}}}+\norm{\nabla\zeta_1}_{\infty,L^p} + \frac{k_1}{\lambda_1}\right) + k_2C''\right\}\epsilon.
\end{eqnarray}
Combining estimations \eqref{Apest1}, \eqref{Apest2} and \eqref{Apest3} we get
\begin{equation}
\norm{\begin{bmatrix}
u_1\\v_1
\end{bmatrix}(t+T)-\begin{bmatrix}
u_1\\v_1
\end{bmatrix}(t)} \leqslant \widetilde{C}\epsilon,\, t\in \mathbb{R},
\end{equation}
where
\begin{eqnarray}
\widetilde{C} &=& k_4 C \left(\norm{\omega_1}_{\infty,L^{\frac{p}{2}}}+\norm{\nabla\zeta_1}_{\infty,L^{p}} + \frac{k_1}{\lambda_1} \right)\cr
&&+ \frac{k_1k_4C}{\lambda_1+1}\left(\norm{\omega_1}_{\infty,L^{\frac{p}{2}}}+\norm{\nabla\zeta_1}_{\infty,L^{p}}\right) +\frac{k_1}{\lambda_1+1}\cr
&&+ k_2k_4CC''\left(\norm{\omega_1}_{\infty,L^{\frac{p}{2}}}+\norm{\nabla\zeta_1}_{\infty,L^p} + \frac{k_1}{\lambda_1}\right) + k_2C''.
\end{eqnarray}
Therefore, the function $\begin{bmatrix}u_1\\v_1\end{bmatrix}$ is almost periodic in $\mathcal{X}$.

{\bf Step 2:} We remain to prove the limit \eqref{limitPAP} which is equivalent to\
\begin{equation}
\lim_{t\to +\infty}\left( \norm{u_2(t)}_{L^{\frac{p}{2}}} + \norm{v_2(t)}_{L^{\frac{p}{2}}} + \norm{\nabla v_2(t)}_{L^p} \right) = 0.
\end{equation} 
Below, we prove in detail that $\lim\limits_{t\to +\infty}\norm{u_2(t)}_{L^{\frac{p}{2}}} =0$ by using the condition \eqref{limitCD}. The same limits of $v_2(t)$ and $\nabla v_2(t)$ hold by the same way amd we obtain the limit \eqref{limitPAP}. 

Now, by using formula \eqref{u2} we have
\begin{eqnarray}\label{u22}
\norm{u_2(t)}_{L^{\frac{p}{2}}} &\leqslant&  \norm{e^{t\Delta}u_0}_{L^{\frac{p}{2}}} + \int_t^{+\infty} \norm{e^{z\Delta}\nabla \cdot(\omega_1\nabla\zeta_1)(t-z)}_{L^{\frac{p}{2}}}dz + \int_t^{+\infty} \norm{e^{z\Delta}g_1(t-z)}_{L^{\frac{p}{2}}}dz\cr
&& + \int_0^t \norm{e^{(t-s)\Delta}\nabla \cdot[\omega_1\nabla\zeta_2 + \omega_2\nabla\zeta](s)}_{L^{\frac{p}{2}}} ds + \int_0^t \norm{e^{(t-s)\Delta}g_2(s)}_{L^{\frac{p}{2}}}ds.
\end{eqnarray}
Similar to estimate \eqref{estu} in the proof of Lemma \ref{Thm:linear}, we can establish that
\begin{eqnarray}\label{limitu2}
\norm{u_2(t)}_{L^{\frac{p}{2}}} &\leqslant&  k_1e^{-\lambda_1 t}\norm{u_0}_{L^{\frac{p}{2}}} + k_4\norm{\omega_1}_{\infty,L^{\frac{p}{2}}}\norm{\nabla\zeta_1}_{\infty,L^p}\int_t^{+\infty}\left( 1 + z^{-\frac{1}{2}-\frac{n}{2p}} \right)e^{-\lambda_1z}dz\cr
&&+ k_1\norm{g_1}_{\infty,L^{\frac{p}{2}}}\int_t^{+\infty}e^{-\lambda_1z}dz \cr
&&+ k_4\norm{\omega_1}_{\infty,L^{\frac{p}{2}}}\int_0^{+\infty}\left( 1 + z^{-\frac{1}{2}-\frac{n}{2p}} \right)e^{-\lambda_1z}\norm{\nabla\zeta_2(t-z)}_{L^p}dz\cr
&&+ k_4\norm{\nabla\zeta}_{\infty,L^p}\int_0^{+\infty}\left( 1 + z^{-\frac{1}{2}-\frac{n}{2p}} \right)e^{-\lambda_1z}\norm{\omega_2(t-z)}_{L^{\frac{p}{2}}}dz\cr
&&+ k_1\int_0^{+\infty}e^{-\lambda_1z} \norm{g_2(t-z)}_{L^{\frac{p}{2}}}dz.
\end{eqnarray}
Clearly, we have the limits of three first terms in the right-hand side of \eqref{limitu2} are equal to zero. To prove that the limits of remain terms are equal to zero, we need to use \eqref{limitCD}. For example, we prove that
\begin{equation}\label{limitzeta2}
\lim_{t\to +\infty}\int_0^{+\infty}\left( 1 + z^{-\frac{1}{2}-\frac{n}{2p}} \right)e^{-\lambda_1z}\norm{\nabla\zeta_2(t-z)}_{L^p}dz =0.
\end{equation}  
Indeed, since $\lim\limits_{t\to +\infty}\norm{\nabla\zeta_2(t)}_{L^p} =0$, we have for each $\varepsilon>0$, there exists $T_0$ large enough such that for all $t>T_0$, then
$\norm{\nabla\zeta_2(t)}_{L^p}<\varepsilon$. Therefore, for $t>2T_0$ we have
\begin{eqnarray}\label{limPAP1}
&&\int_0^{+\infty}\left( 1 + z^{-\frac{1}{2}-\frac{n}{2p}} \right)e^{-\lambda_1z}\norm{\nabla\zeta_2(t-z)}_{L^p}dz\cr
&=& \int_0^{t/2}\left( 1 + z^{-\frac{1}{2}-\frac{n}{2p}} \right)e^{-\lambda_1z}\norm{\nabla\zeta_2(t-z)}_{L^p}dz
\cr
&&+ \int_{t/2}^{+\infty}\left( 1 + z^{-\frac{1}{2}-\frac{n}{2p}} \right)e^{-\lambda_1z}\norm{\nabla\zeta_2(t-z)}_{L^p}dz\cr
&\leqslant& \varepsilon \int_0^{t/2}\left( 1 + z^{-\frac{1}{2}-\frac{n}{2p}} \right)e^{-\lambda_1z}dz + \norm{\nabla \zeta_2}_{\infty,L^p} \int_{t/2}^{+\infty}\left( 1 + z^{-\frac{1}{2}-\frac{n}{2p}} \right)e^{-\lambda_1z}dz\cr
&\leqslant& \varepsilon \int_0^{+\infty}\left( 1 + z^{-\frac{1}{2}-\frac{n}{2p}} \right)e^{-\lambda_1z}dz + \norm{\nabla \zeta_2}_{\infty,L^p} \int_{t/2}^{+\infty}\left( 1 + z^{-\frac{1}{2}-\frac{n}{2p}} \right)e^{-\lambda_1z}dz\cr
&\leqslant& \varepsilon \left(\frac{1}{\lambda_1} + \lambda_1^{-\frac{1}{2}+\frac{n}{2p}}{\bf \Gamma}\left( \frac{1}{2}-\frac{n}{2p} \right)\right)+ \norm{\nabla \zeta_2}_{\infty,L^p} \int_{t/2}^{+\infty}\left( 1 + z^{-\frac{1}{2}-\frac{n}{2p}} \right)e^{-\lambda_1z}dz.
\end{eqnarray}
From the convergence
\begin{equation}
\int_{t/2}^{+\infty}\left( 1 + z^{-\frac{1}{2}-\frac{n}{2p}} \right)e^{-\lambda_1z}dz<\frac{1}{\lambda_1} + \lambda_1^{-\frac{1}{2}+\frac{n}{2p}}{\bf \Gamma}\left( \frac{1}{2}-\frac{n}{2p}\right)  <+\infty, 
\end{equation}
we have
\begin{equation}
\lim_{t\to +\infty }\int_{t/2}^{+\infty}\left( 1 + z^{-\frac{1}{2}-\frac{n}{2p}} \right)e^{-\lambda_1z}dz = 0.
\end{equation}
Hence, there exists $T_1$ large enough such that 
\begin{equation}
\int_{t/2}^{+\infty}\left( 1 + z^{-\frac{1}{2}-\frac{n}{2p}} \right)e^{-\lambda_1z}dz<\varepsilon
\end{equation}
for all $t>T_1$. Plugging this into \eqref{limPAP1}, we have for all $t\geqslant \max\{ 2T_0,T_1\}$:
\begin{eqnarray}
&&\int_0^{+\infty}\left( 1 + z^{-\frac{1}{2}-\frac{n}{2p}} \right)e^{-\lambda_1z}\norm{\nabla\zeta_2(t-z)}_{L^p}dz\cr
&\leqslant& \varepsilon \left(\frac{1}{\lambda_1} + \lambda_1^{-\frac{1}{2}+\frac{n}{2p}}{\bf \Gamma}\left( \frac{1}{2}-\frac{n}{2p} \right) + \norm{\nabla \zeta_2}_{\infty,L^p}\right).
\end{eqnarray}
This leads to limit \eqref{limitzeta2}. By the same way we get the limits of two last terms in the right-hand side of \eqref{u22} are equal to zero and our proof is complete.

\end{proof}

\section{Semi-linear systems: the existence and exponential stability} \label{S4} 
In this section, we establish the existence, uniqueness and exponential stability of AAP-mild solutions for the semi-linear system \eqref{KS}. Similar to the linear system, we define the mild solutions of semi-linear system \eqref{KS} by the solutions of the following integral equations
\begin{equation}\label{interEq11}
u(t) = e^{t\Delta}u_0 - \int_0^t e^{(t-s)\Delta} \nabla \cdot (u\nabla v)(s)ds + \int_0^t e^{(t-s)\Delta} g(s)ds 
\end{equation}
and 
\begin{equation}\label{interEq22}
v(t) = e^{t(\Delta-\mathrm{Id})}v_0 + \int_0^t e^{(t-s)(\mathrm{\Delta}-\mathrm{Id})}u(s)ds + \int_0^t e^{(t-s)(\mathrm{\Delta}-\mathrm{Id})}h(s)ds
\end{equation}
for $u$ satisfies \eqref{interEq1}.
In the matrix form, these equations are equivalent to
\begin{equation}\label{interEq'}
\begin{bmatrix}
u\\v
\end{bmatrix}(t) = e^{-t\mathcal{A}}\begin{bmatrix}
u_0\\v_0
\end{bmatrix} + \mathcal{B}\left(\begin{bmatrix}
u\\v
\end{bmatrix}\right)(s) + \mathcal{F}\left( \begin{bmatrix}
u\\0
\end{bmatrix} \right)(s),
\end{equation}
where
\begin{equation}
\mathcal{B}\left( \begin{bmatrix}u\\v\end{bmatrix}\right)(s) = -\int_0^t e^{-(t-s)\mathcal{A}}\begin{bmatrix} \nabla \cdot(u\nabla v) \\ 0 \end{bmatrix}(s)  ds
\end{equation}
and
\begin{equation}
\mathcal{F}\left( \begin{bmatrix} u \\0\end{bmatrix}\right)(s) = \int_0^t e^{-(t-s)\mathcal{A}}F\left( s,\begin{bmatrix}u \\0 \end{bmatrix} \right)ds = \int_0^t e^{-(t-s)\mathcal{A}} \begin{bmatrix}
g \\ u + h
\end{bmatrix}(s)  ds.
\end{equation}
We state and prove the main results of this section in the following theorem. 
\begin{theorem}\label{thm2.20}
Let $2\leqslant n$ and $\max\{3,n \} < p$. For the initial data $u_0\in C(\overline{\Omega})\cap L_0^1(\Omega)$ and $v_0 \in C^1(\overline{\Omega})\cap L_0^1(\Omega)$ such that $\begin{bmatrix}
u_0\\v_0
\end{bmatrix} \in L^{\frac{p}{2}}(\Omega)\times L^{\frac{p}{2}}(\Omega)$, $\nabla v_0 \in L^p(\Omega)$ and a given AAP-function $\begin{bmatrix}g\\h \end{bmatrix} \in C_b(\mathbb{R}_+, L^{\frac{p}{2}}(\Omega)\times L^{\frac{p}{2}}(\Omega))$. If the norms $\norm{\begin{bmatrix} u_0\\v_0\end{bmatrix}}$ and $\norm{\begin{bmatrix}
g\\h
\end{bmatrix}}_{L^{\frac{p}{2}}\times L^{\frac{p}{2}}}$ are small enough, then the following assertions hold:
\begin{itemize}
\item[$(i)$] There exists a unique AAP-mild solution $(\hat{u},\hat{v})$ of Keller-Segel (P-P) system \eqref{KS} satisfying integral equations \eqref{interEq11} and \eqref{interEq22}. 

\item[$(ii)$] The mild solution $(\hat{u},\hat{v})$ is exponential stable in the sense that: for another bounded mild solution $(\tilde{u},\tilde{v}) $ such that $\norm{(u_0,v_0)-(\tilde{u}(0),\tilde{v}(0))}$ small enough, we have
\begin{equation}\label{expStable}
\norm{\begin{bmatrix}
\hat{u}-\tilde{u}\\\hat{v}-\tilde{v}
\end{bmatrix}(t)} \leqslant De^{-\sigma t}\norm{\begin{bmatrix}
u_0-\tilde{u}(0)\\v_0-\tilde{v}(0)
\end{bmatrix}},
\end{equation}
where $0<\sigma<\lambda_1$ and $D$ is a positive constant which does not depend on $(\hat{u},\hat{v})$ and $(\tilde{u},\tilde{v})$. Here, we recall that
\begin{equation*}
\norm{\begin{bmatrix}
\omega\\\zeta
\end{bmatrix}(t)} = \norm{\omega(t)}_{L^{\frac{p}{2}}} + \norm{\zeta(t)}_{L^{\frac{p}{2}}} + \norm{\nabla \zeta(t)}_{L^p}.
\end{equation*}
\end{itemize}
\end{theorem}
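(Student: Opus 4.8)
The plan is to deduce Theorem \ref{thm2.20} from the linear theory (Lemma \ref{Thm:linear} and Theorem \ref{pest}) by a fixed-point argument for part $(i)$ and a Gronwall-type estimate for part $(ii)$. For $(i)$, I would consider the solution operator $\Phi$ that sends a pair $(\omega,\zeta)\in\mathcal{X}$ to the mild solution $S(\omega,\zeta)=(u,v)$ of the linear system \eqref{Linear_matrixKS1} with the given data $u_0,v_0,g,h$; a fixed point of $\Phi$ is precisely a mild solution of \eqref{KS} in the sense of \eqref{interEq11}--\eqref{interEq22}. By the boundedness estimate \eqref{boundedness12}, $\Phi$ maps the closed ball $\mathcal{B}_R=\{(\omega,\zeta)\in\mathcal{X}:\norm{(\omega,\zeta)}_{\mathcal{X}}\le R\}$ into itself provided $C_1\norm{(u_0,v_0)}_{L^{p/2}\times L^{p/2}} + C_2\norm{\nabla v_0}_{L^p} + C_3 R^2 + C_4\norm{(g,h)}_{\infty,L^{p/2}\times L^{p/2}} \le R$, which holds for a suitable small $R$ once the data norms are small enough (solve the quadratic $C_3R^2-R+(\text{data})\le 0$). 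For the contraction property I would estimate $\norm{\Phi(\omega_1,\zeta_1)-\Phi(\omega_2,\zeta_2)}_{\mathcal{X}}$ using the bilinearity of $\mathcal{B}$: the difference $\omega_1\nabla\zeta_1-\omega_2\nabla\zeta_2=(\omega_1-\omega_2)\nabla\zeta_1+\omega_2\nabla(\zeta_1-\zeta_2)$ together with the Hölder bound $\norm{\omega\nabla\zeta}_{L^{p/3}}\le\norm{\omega}_{L^{p/2}}\norm{\nabla\zeta}_{L^p}$ and the dispersive estimate \eqref{dispersive4} gives a Lipschitz constant of order $C_3 R$, hence a contraction on $\mathcal{B}_R$ for $R$ small. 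The Banach fixed-point theorem then yields a unique fixed point $(\hat u,\hat v)$ in $\mathcal{B}_R$. To see that this solution is AAP, I would invoke Theorem \ref{pest}: the subspace $AAP$-functions inside $\mathcal{X}$ is closed, and since $(g,h)$ is AAP and $S$ preserves the AAP property, $\Phi$ maps the closed set $\mathcal{B}_R\cap AAP$ into itself; the contraction fixed point therefore lies in this closed subset, so $(\hat u,\hat v)$ is AAP. Uniqueness among all AAP-mild solutions of small norm follows from the contraction estimate.

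For part $(ii)$, set $w=\hat u-\tilde u$, $\rho=\hat v-\tilde v$ and subtract the integral equations \eqref{interEq11}--\eqref{interEq22} for the two solutions. This gives $w(t)=e^{t\Delta}w(0)-\int_0^t e^{(t-s)\Delta}\nabla\cdot(\hat u\nabla\hat v-\tilde u\nabla\tilde v)(s)\,ds$ and $\rho(t)=e^{t(\Delta-\mathrm{Id})}\rho(0)+\int_0^t e^{(t-s)(\Delta-\mathrm{Id})}w(s)\,ds$. Writing $\hat u\nabla\hat v-\tilde u\nabla\tilde v = w\nabla\hat v+\tilde u\nabla\rho$ and applying \eqref{dispersive4} together with Hölder, and applying \eqref{dispersive1}, \eqref{dispersive2}, \eqref{dispersive3} to the other terms, I would derive for $\Psi(t):=e^{\sigma t}\norm{(w,\rho)(t)}$ (with $0<\sigma<\lambda_1$) an inequality of the form
\begin{equation*}
\Psi(t)\le K_0\norm{(w(0),\rho(0))} + K_1\bigl(\norm{(\hat u,\hat v)}_{\mathcal{X}}+\norm{(\tilde u,\tilde v)}_{\mathcal{X}}\bigr)\int_0^t \bigl(1+(t-s)^{-\frac12-\frac{n}{2p}}\bigr)e^{-(\lambda_1-\sigma)(t-s)}\Psi(s)\,ds,
\end{equation*}
where I used $e^{\sigma(t-s)}e^{-\lambda_1(t-s)}=e^{-(\lambda_1-\sigma)(t-s)}$ and the smallness of the data guarantees both mild solutions have small $\mathcal{X}$-norm. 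The kernel $\eta(\tau)=(1+\tau^{-\frac12-\frac{n}{2p}})e^{-(\lambda_1-\sigma)\tau}$ is integrable on $(0,\infty)$ because $p>n$ makes the singularity at $0$ integrable and the exponential controls infinity; its total mass can be made arbitrarily small by an appropriate... in fact the decisive point is that the prefactor $K_1(\norm{\hat u,\hat v}_{\mathcal X}+\norm{\tilde u,\tilde v}_{\mathcal X})$ times $\int_0^\infty\eta$ is $<1$ when the data are small. A singular Gronwall-type lemma for convolution inequalities (as referenced in the introduction, the "Gronwall-type inequality for a double integral") then yields $\Psi(t)\le D\norm{(w(0),\rho(0))}$ uniformly in $t$, which is exactly \eqref{expStable} after dividing by $e^{\sigma t}$.

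The main obstacle, I expect, is the nested structure of the estimates for $v$ (and $\nabla v$): because $v$ is obtained from $u$ via another Duhamel integral, every bound on $u$ feeds into the bound on $v$, and in the stability argument one must be careful that the convolution inequality for $\Psi$ genuinely closes — i.e. that bounding $\norm{\rho(t)}_{L^{p/2}}$ and $\norm{\nabla\rho(t)}_{L^p}$ in terms of $\int_0^t(\dots)\Psi(s)ds$ does not reintroduce $\sup$-norms of $w$ that would break the Gronwall scheme. Handling this cleanly requires either iterating the estimate (plug the $w$-bound into the $\rho$-bound and absorb) or setting up a single scalar functional $\Psi$ that already bundles $u$, $v$, $\nabla v$, and then verifying that the resulting convolution kernel is still integrable and small. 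A secondary technical point is checking that the mild solution from the fixed point has the requisite time-continuity and initial-value properties and that "another bounded mild solution" automatically has small $\mathcal{X}$-norm given small initial data; this should follow from the a priori bound \eqref{boundedness12} applied to $(\tilde u,\tilde v)$ itself viewed as a fixed point of its own solution operator.
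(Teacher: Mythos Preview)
Your proposal for part $(i)$ is essentially identical to the paper's proof: define $\Phi=S$ on a small ball of AAP-functions, use \eqref{boundedness12} for self-mapping, the bilinear splitting $\omega_1\nabla\zeta_1-\omega_2\nabla\zeta_2=(\omega_1-\omega_2)\nabla\zeta_1+\omega_2\nabla(\zeta_1-\zeta_2)$ for the contraction, and Theorem \ref{pest} to stay inside the AAP class. Nothing to add there.

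For part $(ii)$ your overall strategy matches the paper's, but the single-integral inequality you write for $\Psi$ does \emph{not} close by the smallness argument you propose. The term $\int_0^t e^{(t-s)(\Delta-\mathrm{Id})}w(s)\,ds$ in the $\rho$-equation (and its gradient analogue) carries only the fixed constants $k_1,k_2$ from Lemma \ref{Heatestimates}, with no factor of $\norm{(\hat u,\hat v)}_{\mathcal X}$ or $\norm{(\tilde u,\tilde v)}_{\mathcal X}$ in front; so the bundled kernel for $\Psi$ has an order-one piece and the condition ``prefactor times $\int_0^\infty\eta<1$'' cannot be arranged by shrinking the data. You actually diagnose this correctly in your obstacle paragraph, and the paper takes exactly the first of your two suggested fixes: it substitutes the $w$-estimate into the $\rho$- and $\nabla\rho$-estimates \emph{before} summing, which produces an inequality for $\phi(t)=e^{\sigma t}\norm{(w,\rho)(t)}$ containing both a single integral $\int_0^t(\dots)\phi(s)\,ds$ and a genuine double integral $\int_0^t\int_0^s(\dots)\phi(z)\,dz\,ds$. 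The cited Gronwall-type result from \cite{MiPeFi} then bounds $\phi(t)$ by $M\norm{(w(0),\rho(0))}$ times the exponential of the (uniformly bounded) kernel integrals, so no smallness of the prefactor is needed at that step. In short: drop the absorption-by-smallness route and go directly to the iterated double-integral Gronwall you already anticipated.
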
 
\begin{proof}
$(i)$ We first establish the well-posedness. We denote $B^{AAP}_{\rho}$ which is a ball centered at $0$ and radius $\rho>0$ and consists of all AAP-function in $\mathcal{X}$. For each given function $\begin{bmatrix}
\omega\\\zeta
\end{bmatrix} \in B_{\rho}^{AAP}$ we consider the following linear integral equation
\begin{equation}\label{eq}
\begin{bmatrix}
u\\v
\end{bmatrix}(t) = e^{-t\mathcal{A}}\begin{bmatrix}
u_0\\v_0
\end{bmatrix} + \mathcal{B}\left(\begin{bmatrix}
\omega\\\zeta
\end{bmatrix}\right)(s) + \mathcal{F}\left( \begin{bmatrix}
u\\0
\end{bmatrix} \right)(s).
\end{equation} 
By Theorem \ref{pest}, this integral has a unique solution $\begin{bmatrix}
u\\v
\end{bmatrix}$ and we can set a mapping $\Phi: B_\rho^{AAP} \to B_\rho^{AAP}$ by
\begin{equation}
\Phi\left( \begin{bmatrix}
\omega\\\zeta
\end{bmatrix} \right) = \begin{bmatrix}
u\\v
\end{bmatrix}
\end{equation}
which is a unique solution of \eqref{eq}.
Now, we prove that $\Phi$ maps $B_\rho^{AAP}$ into itself and is contraction. Indeed, by using Lemma \ref{Thm:linear} we have
\begin{eqnarray}
\norm{\Phi\left( \begin{bmatrix}
\omega\\\zeta
\end{bmatrix} \right) (t)} &\leqslant& \max\left\{ C_1,C_2 \right\}\norm{\begin{bmatrix} u_0\\v_0\end{bmatrix}} + C_2\norm{\begin{bmatrix}
\omega\\\zeta
\end{bmatrix}}_{\mathcal{X}}^2 + C_4\norm{\begin{bmatrix}
g\\h
\end{bmatrix}}_{L^{\frac{p}{2}}\times L^{\frac{p}{2}}}\cr
&\leqslant& \max\left\{ C_1,C_2 \right\}\norm{\begin{bmatrix} u_0\\v_0\end{bmatrix}} + C_2\rho^2 + C_4\norm{\begin{bmatrix}
g\\h
\end{bmatrix}}_{L^{\frac{p}{2}}\times L^{\frac{p}{2}}}\cr
&\leqslant& \rho
\end{eqnarray} 
provided that $\rho$, $\norm{\begin{bmatrix} u_0\\v_0\end{bmatrix}}$ and $\norm{\begin{bmatrix}
g\\h
\end{bmatrix}}_{L^{\frac{p}{2}}\times L^{\frac{p}{2}}}$ are small enough. Hence, the mapping $\Phi$ map $B_\rho^{AAP}$ into itself.

Now, for given functions $\begin{bmatrix}
\omega_1\\\zeta_1
\end{bmatrix}, \, \begin{bmatrix}
\omega_2\\\zeta_2
\end{bmatrix} \in B_{\rho}^{AAP}$ we have
\begin{eqnarray}\label{phi-phi}
\Phi\left( \begin{bmatrix}
\omega_1\\\zeta_1
\end{bmatrix} \right) - \Phi\left( \begin{bmatrix}
\omega_2\\\zeta_2
\end{bmatrix} \right) &=& \mathcal{B}\left(\begin{bmatrix}
\omega_1\\\zeta_1
\end{bmatrix}\right)(s) - \mathcal{B}\left(\begin{bmatrix}
\omega_2\\\zeta_2
\end{bmatrix}\right)(s)+ \mathcal{F}\left( \begin{bmatrix}
u_1\\0
\end{bmatrix} \right)(s) - \mathcal{F}\left( \begin{bmatrix}
u_2\\0
\end{bmatrix} \right)(s)\cr
&=& \int_0^t e^{-(t-s)\mathcal{A}}\begin{bmatrix} \nabla \cdot(-\omega_1\nabla \zeta_1 + \omega_2\nabla\zeta_2) \\ 0 \end{bmatrix}(s)  ds \cr
&&+ \int_0^t e^{-(t-s)\mathcal{A}} \begin{bmatrix}
0 \\ u_1 - u_2
\end{bmatrix}(s)  ds\cr
&=& \int_0^t e^{-(t-s)\mathcal{A}}\begin{bmatrix} \nabla \cdot[-(\omega_1-\omega_2)\nabla \zeta_1 + \omega_2\nabla(-\zeta_1+\zeta_2)] \\ 0 \end{bmatrix}(s)  ds \cr
&&+ \int_0^t e^{-(t-s)\mathcal{A}} \begin{bmatrix}
0 \\ u_1 - u_2
\end{bmatrix}(s)  ds\cr
&=& \int_0^t e^{-(t-s)\mathcal{A}}\begin{bmatrix} \nabla \cdot[-(\omega_1-\omega_2)\nabla \zeta_1] \\ 0 \end{bmatrix}(s)  ds \cr
&&+ \int_0^t e^{-(t-s)\mathcal{A}}\begin{bmatrix} \nabla \cdot[\omega_2\nabla (-\zeta_1+\zeta_2)] \\ 0 \end{bmatrix}(s)  ds \cr
&&+ \int_0^t e^{-(t-s)\mathcal{A}} \begin{bmatrix}
0 \\ u_1 - u_2
\end{bmatrix}(s)  ds,
\end{eqnarray}
where we set
\begin{equation}
\Phi\left( \begin{bmatrix}
\omega_1\\\zeta_1
\end{bmatrix} \right) = \begin{bmatrix}
u_1\\v_1
\end{bmatrix}\hbox{   and   } \Phi\left( \begin{bmatrix}
\omega_2\\\zeta_2
\end{bmatrix} \right) = \begin{bmatrix}
u_2\\v_2
\end{bmatrix}.
\end{equation}
By the same way as in the proof of Lemma \ref{Thm:linear} we can estimate that
\begin{eqnarray}\label{core}
\norm{\Phi\left( \begin{bmatrix}
\omega_1\\\zeta_1
\end{bmatrix} \right) - \Phi\left( \begin{bmatrix}
\omega_2\\\zeta_2
\end{bmatrix} \right)}_{\mathcal{X}} &\leq& 2C_3 \left( \norm{\begin{bmatrix}
\omega_1\\\zeta_1
\end{bmatrix}+\begin{bmatrix}
\omega_2\\\zeta_2
\end{bmatrix}}_{\mathcal{X}}  \right) \left(\norm{\begin{bmatrix}
\omega_1\\\zeta_1
\end{bmatrix}-\begin{bmatrix}
\omega_2\\\zeta_2
\end{bmatrix}}_{\mathcal{X}}\right)\cr
&\leqslant& 4C_3\rho \left(\norm{\begin{bmatrix}
\omega_1\\\zeta_1
\end{bmatrix}-\begin{bmatrix}
\omega_2\\\zeta_2
\end{bmatrix}}_{\mathcal{X}}\right).
\end{eqnarray}
Hence, the mapping $\Phi$ is contraction provided that $\rho<\dfrac{1}{4C_3}$.

By using fixed point arguments, there is a  fixed point $\begin{bmatrix}
\hat{u}\\\hat{v}
\end{bmatrix}$ of $\Phi$ which is clearly solution of integral equation \eqref{interEq'}. Therefore, the Keller-Segel system \eqref{KS} has a mild solution. The uniqueness holds by using core estimate \eqref{core}. 

$(ii)$ Now, we prove the exponential stability \eqref{expStable} of mild solution $\begin{bmatrix}
\hat{u}\\\hat{v}
\end{bmatrix}$. In particular, there exists a positive constant $\tilde{\rho}>0$ such that $\norm{\begin{bmatrix}
\tilde{u}\\\tilde{v}
\end{bmatrix}}_{\mathcal{X}}\leqslant \tilde{\rho}$.

By the same way as \eqref{phi-phi} we can express
\begin{eqnarray}
\begin{bmatrix}
\hat{u}\\\hat{v}
\end{bmatrix}(t) - \begin{bmatrix}
\tilde{u}\\\tilde{v}
\end{bmatrix}(t) &=& e^{-t\mathcal{A}}\left(  \begin{bmatrix}
 u_0\\v_0
\end{bmatrix}- \begin{bmatrix}
 \tilde{u}(0)\\\tilde{v}(0)
\end{bmatrix} \right)\cr
&&+ \int_0^t e^{-(t-s)\mathcal{A}}\begin{bmatrix} \nabla \cdot[-(\hat{u}-\tilde{u})\nabla \hat{v}] \\ 0 \end{bmatrix}(s)  ds \cr
&&+ \int_0^t e^{-(t-s)\mathcal{A}}\begin{bmatrix} \nabla \cdot[\tilde{u}\nabla (-\hat{v}+\tilde{v})] \\ 0 \end{bmatrix}(s)  ds \cr
&&+ \int_0^t e^{-(t-s)\mathcal{A}} \begin{bmatrix}
0 \\ \hat{u} - \tilde{u}
\end{bmatrix}(s)  ds.
\end{eqnarray}
This leads to
\begin{eqnarray}\label{stab}
\norm{\begin{bmatrix}
\hat{u}\\\hat{v}
\end{bmatrix}(t) - \begin{bmatrix}
\tilde{u}\\\tilde{v}
\end{bmatrix}(t)} &\leqslant& \norm{e^{-t\mathcal{A}}\left(  \begin{bmatrix}
 u_0\\v_0
\end{bmatrix}- \begin{bmatrix}
 \tilde{u}(0)\\\tilde{v}(0)
\end{bmatrix} \right)}\cr
&&+ \int_0^t \norm{e^{-(t-s)\mathcal{A}}\begin{bmatrix} \nabla \cdot[-(\hat{u}-\tilde{u})\nabla \hat{v}] \\ 0 \end{bmatrix}(s)}  ds \cr
&&+ \int_0^t \norm{e^{-(t-s)\mathcal{A}}\begin{bmatrix} \nabla \cdot[\tilde{u}\nabla (-\hat{v}+\tilde{v})] \\ 0 \end{bmatrix}(s)}  ds \cr
&&+ \int_0^t \norm{e^{-(t-s)\mathcal{A}} \begin{bmatrix}
0 \\ \hat{u} - \tilde{u}
\end{bmatrix}(s)}  ds.
\end{eqnarray}
By the same way as \eqref{estu}, \eqref{estv} and \eqref{estderv} in the proof of Lemma \ref{Thm:linear} we can estimate that
\begin{eqnarray}
&&\norm{\hat{u}(t)-\tilde{u}(t)}_{L^{\frac{p}{2}}} \cr
&\leqslant& \norm{e^{t\Delta}(u_0-\tilde{u}(0))}_{L^{\frac{p}{2}}} + \int_0^t \norm{e^{(t-s)\Delta}\nabla\cdot[(\hat{u}-\tilde{u})\nabla\hat{v}](s)}_{L^{\frac{p}{2}}}ds \cr
&&+ \int_0^t \norm{e^{(t-s)\Delta}\nabla\cdot[\tilde{u}\nabla(-\hat{v}+\tilde{v})](s)}_{L^{\frac{p}{2}}}ds \cr
&\leqslant& k_1e^{-\lambda_1 t}\norm{u_0-\tilde{u}(0)}_{L^{\frac{p}{2}}} + k_4\int_0^t \left( 1 + (t-s)^{-\frac{1}{2}-\frac{n}{2p}} \right)  e^{-\lambda_1(t-s)} \norm{(\hat{u}(s)-\tilde{u}(s))\nabla\hat{v}(s)}_{L^{\frac{p}{3}}}ds\cr
&&+ k_4\int_0^t \left( 1 + (t-s)^{-\frac{1}{2}-\frac{n}{2p}} \right)  e^{-\lambda_1(t-s)} \norm{\tilde{u}(s)\nabla(-\hat{v}(s)+\tilde{v}(s))}_{L^{\frac{p}{3}}}ds\cr
&\leqslant& k_1e^{-\lambda_1 t}\norm{u_0-\tilde{u}(0)}_{L^{\frac{p}{2}}} + k_4 \norm{\nabla\hat{v}}_{\infty,L^{p}} \int_0^t \left( 1 + (t-s)^{-\frac{1}{2}-\frac{n}{2p}} \right)  e^{-\lambda_1 (t-s)}\norm{\hat{u}(s)-\tilde{u}(s)}_{L^{\frac{p}{2}}} ds\cr
&&+ k_4 \norm{\tilde{u}}_{\infty,L^{\frac{p}{2}}} \int_0^t \left( 1 + (t-s)^{-\frac{1}{2}-\frac{n}{2p}} \right)  e^{-\lambda_1 (t-s)}\norm{\nabla(-\hat{v}(s)+\tilde{v}(s))}_{L^p} ds\cr
&\leqslant& k_1e^{-\lambda_1 t}\norm{u_0-\tilde{u}(0)}_{L^{\frac{p}{2}}} + k_4 \rho \int_0^t \left( 1 + (t-s)^{-\frac{1}{2}-\frac{n}{2p}} \right)  e^{-\lambda_1 (t-s)}\norm{\hat{u}(s)-\tilde{u}(s)}_{L^{\frac{p}{2}}} ds\cr
&&+ k_4 \tilde{\rho} \int_0^t \left( 1 + (t-s)^{-\frac{1}{2}-\frac{n}{2p}} \right)  e^{-\lambda_1 (t-s)}\norm{\nabla(-\hat{v}(s)+\tilde{v}(s))}_{L^p} ds.
\end{eqnarray}
This implies that
\begin{eqnarray}\label{stabu}
&&e^{\sigma t}\norm{\hat{u}(t)-\tilde{u}(t)}_{L^{\frac{p}{2}}} \cr
&\leqslant& k_1e^{-(\lambda_1-\sigma) t}\norm{u_0-\tilde{u}(0)}_{L^{\frac{p}{2}}} + k_4 \rho \int_0^t \left( 1 + (t-s)^{-\frac{1}{2}-\frac{n}{2p}} \right)  e^{-(\lambda_1-\sigma) (t-s)}(e^{\sigma s}\norm{\hat{u}(s)-\tilde{u}(s)}_{L^{\frac{p}{2}}}) ds\cr
&&+ k_4 \tilde{\rho} \int_0^t \left( 1 + (t-s)^{-\frac{1}{2}-\frac{n}{2p}} \right)  e^{-(\lambda_1-\sigma) (t-s)}(e^{\sigma s}\norm{\nabla(-\hat{v}(s)+\tilde{v}(s))}_{L^p}) ds.
\end{eqnarray}
Moreover, {we also have} the following estimate
\begin{eqnarray}
\norm{\hat{v}(t)-\tilde{v}(t)}_{L^{\frac{p}{2}}} &\leqslant& \norm{e^{t(\Delta-\mathrm{Id})}(v_0-\tilde{v}(0)}_{L^{\frac{p}{2}}} + \int_0^t \norm{e^{(t-s)(\Delta-\mathrm{Id})}(\hat{u}(s)-\tilde{v}(s))}_{L^{\frac{p}{2}}}ds\cr
&\leqslant& k_1 e^{-(\lambda_1+1)t} \norm{v_0-\tilde{v}(0)}_{L^{\frac{p}{2}}} + k_1\int_0^t e^{-(\lambda_1+1)(t-s)} \norm{\hat{u}(s)-\tilde{u}(s)}_{L^{\frac{p}{2}}}ds\cr
&\leqslant& k_1 e^{-(\lambda_1+1)t} \norm{v_0-\tilde{v}(0)}_{L^{\frac{p}{2}}} + k_1^2\int_0^t e^{-(\lambda_1+1)(t-s)} e^{-\lambda_1s}\norm{u_0 - \tilde{u}(0)}_{L^{\frac{p}{2}}} ds\cr
&& + k_1k_4\rho\int_0^t e^{-(\lambda_1+1)(t-s)}\int_0^s\left(1+(s-z)^{-\frac{1}{2}-\frac{n}{2p}} \right)e^{-\lambda_1(s-z)}\norm{\hat{u}(z)-\tilde{u}(z)}_{L^{\frac{p}{2}}}dzds\cr
&& + k_1k_4\tilde{\rho}\int_0^t e^{-(\lambda_1+1)(t-s)}\int_0^s\left(1+(s-z)^{-\frac{1}{2}-\frac{n}{2p}} \right)e^{-\lambda_1(s-z)}\norm{\nabla(-\hat{v}(z)+\tilde{v}(z))}_{L^p}dzds\cr
&\leqslant& k_1 e^{-(\lambda_1+1)t} \norm{v_0-\tilde{v}(0)}_{L^{\frac{p}{2}}} + k_1^2e^{-\lambda_1t}\norm{u_0 - \tilde{u}(0)}_{L^{\frac{p}{2}}}\int_0^t e^{-(t-s)}  ds\cr
&& + k_1k_4\rho\int_0^t e^{-(t-s)}\int_0^s\left(1+(s-z)^{-\frac{1}{2}-\frac{n}{2p}} \right)e^{-\lambda_1(t-z)}\norm{\hat{u}(z)-\tilde{u}(z)}_{L^{\frac{p}{2}}}dzds\cr
&& + k_1k_4\tilde{\rho}\int_0^t e^{-(t-s)}\int_0^s\left(1+(s-z)^{-\frac{1}{2}-\frac{n}{2p}} \right)e^{-\lambda_1(t-z)}\norm{\nabla(-\hat{v}(z)+\tilde{v}(z))}_{L^p}dzds.
\end{eqnarray}
This leads to
\begin{eqnarray}\label{stabv}
&&e^{\sigma t}\norm{\hat{v}(t)-\tilde{v}(t)}_{L^{\frac{p}{2}}} \cr
&\leqslant& k_1 e^{-(\lambda_1+1-\sigma)t} \norm{v_0-\tilde{v}(0)}_{L^{\frac{p}{2}}} + k_1^2e^{-(\lambda_1-\sigma)t}\norm{u_0 - \tilde{u}(0)}_{L^{\frac{p}{2}}}\int_0^t e^{-(t-s)}  ds\cr
&& + k_1k_4\rho\int_0^t \int_0^s\left(1+(s-z)^{-\frac{1}{2}-\frac{n}{2p}} \right)e^{-(\lambda_1-\sigma)(t-z)}e^{-(t-s)}(e^{\sigma z}\norm{\hat{u}(z)-\tilde{u}(z)})_{L^{\frac{p}{2}}}dzds\cr
&& + k_1k_4\tilde{\rho}\int_0^t \int_0^s\left(1+(s-z)^{-\frac{1}{2}-\frac{n}{2p}} \right)e^{-(\lambda_1-\sigma)(t-z)}e^{-(t-s)}(e^{\sigma z}\norm{\nabla(-\hat{v}(z)+\tilde{v}(z))}_{L^p})dzds.
\end{eqnarray}
Similar as above and by using estimates \eqref{dispersive2} and \eqref{dispersive3} we have
\begin{eqnarray}\label{stabderv}
&&e^{\sigma t}\norm{\nabla( \hat{v}(t)-\tilde{v}(t))}_{L^{p}}\cr
&\leqslant& k_3 e^{-(\lambda_1+1-\sigma)t} \norm{\nabla (v_0-\tilde{v}(0))}_{L^{p}} + k_1k_2e^{-(\lambda_1-\sigma)t}\norm{u_0 - \tilde{u}(0)}_{L^{\frac{p}{2}}}\int_0^t e^{-(t-s)}  ds\cr
&&+ k_2k_4{\rho}\int_0^t\int_0^s \left( 1 + (t-z)^{-\frac{1}{2}-\frac{n}{2p}} \right)e^{-(\lambda_1-\sigma)(t-z)} e^{-(t-s)}(e^{\sigma z}\norm{\hat{u}(z)-\tilde{u}(z)}_{L^{\frac{p}{2}}})dzds\cr
&& + k_2k_4\tilde{\rho}\int_0^t \int_0^s\left(1+(s-z)^{-\frac{1}{2}-\frac{n}{2p}} \right)e^{-(\lambda_1-\sigma)(t-z)}e^{-(t-s)}(e^{\sigma z}\norm{\nabla(-\hat{v}(z)+\tilde{v}(z))}_{L^p})dzds.
\end{eqnarray}
Setting $\phi(t) = e^{\sigma t}\norm{\begin{bmatrix}\hat{u}-\tilde{u}\\\hat{v}-\tilde{v}\end{bmatrix}(t)}$. Combining inequalities \eqref{stab}, \eqref{stabu}, \eqref{stabv} and \eqref{stabderv} we obtain that
\begin{eqnarray}
\phi(t) &\leqslant& M\norm{\begin{bmatrix}u_0-\tilde{u}(0)\\v_0-\tilde{v}(0)\end{bmatrix}} + N \int_0^t \left(1+(t-s)^{-\frac{1}{2}-\frac{n}{2p}} \right)e^{-(\lambda_1-\sigma)(t-s)}\phi(s)ds \cr
&&+ P\int_0^t \int_0^s \left(1+(s-z)^{-\frac{1}{2}-\frac{n}{2p}} \right)e^{-(\lambda_1-\sigma)(t-z)}e^{-(t-s)}\phi(z)dzds.
\end{eqnarray}
By using the Gronwall-type inequality (see \cite[Theorem 1, pages 383-384]{MiPeFi}), we get
\begin{eqnarray}\label{StabIne}
\phi(t) &\leqslant& M\norm{\begin{bmatrix}u_0-\tilde{u}(0)\\v_0-\tilde{v}(0)\end{bmatrix}}
\exp \left(N \int_0^t \left(1+(t-s)^{-\frac{1}{2}-\frac{n}{2p}} \right)e^{-(\lambda_1-\sigma)(t-s)}ds \right)\cr
&&\times \exp\left( P\int_0^t\int_0^s  \left(1+(s-z)^{-\frac{1}{2}-\frac{n}{2p}} \right)e^{-(\lambda_1-\sigma)(t-z)}e^{-(t-s)}dzds \right).
\end{eqnarray}
{By straightforward calculations, we can estimate the integrals in the right-hand side of \eqref{StabIne} as follows:}
\begin{equation}\label{Stab11}
\int_0^t \left(1+(t-s)^{-\frac{1}{2}-\frac{n}{2p}} \right)e^{-(\lambda_1-\sigma)(t-s)}ds< \frac{1}{\lambda_1-\sigma} + (\lambda_1-\sigma)^{-\frac{1}{2}+\frac{n}{2p}}{\bf \Gamma}\left( \frac{1}{2} - \frac{n}{2p} \right)<+\infty
\end{equation}
and 
\begin{eqnarray}\label{Stab12}
&&\int_0^t\int_0^s  \left(1+(s-z)^{-\frac{1}{2}-\frac{n}{2p}} \right)e^{-(\lambda_1-\sigma)(t-z)}e^{-(t-s)}dzds\cr
&<& \int_0^t\int_0^u \left(1+v^{-\frac{1}{2}-\frac{n}{2p}} \right)e^{-(\lambda_1-\sigma)(u+v)}e^{-u}dvdu \hbox{   (where   }u=t-s,v=s-z)\cr
&=& \int_0^t e^{-(\lambda_1-\sigma)u}e^{-u}\int_0^u \left(1+v^{-\frac{1}{2}-\frac{n}{2p}} \right)e^{-(\lambda_1-\sigma)v}dvdu\cr
&<& \int_0^t e^{-(\lambda_1-\sigma)u}e^{-u}\int_0^{+\infty} \left(1+v^{-\frac{1}{2}-\frac{n}{2p}} \right)e^{-(\lambda_1-\sigma)v}dvdu\cr
&=& \int_0^t e^{-(\lambda_1-\sigma)u}e^{-u} \left(  \frac{1}{\lambda_1-\sigma} + (\lambda_1-\sigma)^{-\frac{1}{2}+\frac{n}{2p}}{\bf \Gamma}\left(  \frac{1}{2} -\frac{n}{2p}\right)\right)du\cr
&<&\left( \frac{1}{\lambda_1-\sigma} + (\lambda_1-\sigma)^{-\frac{1}{2}+\frac{n}{2p}}{\bf \Gamma}\left( \frac{1}{2} - \frac{n}{2p} \right) \right)  \int_0^{+\infty} e^{-(\lambda_1-\sigma+1)u}du\cr
&=& \frac{1}{\lambda_1+1-\sigma}\left( \frac{1}{\lambda_1-\sigma} + (\lambda_1-\sigma)^{-\frac{1}{2}+\frac{n}{2p}}{\bf \Gamma}\left( \frac{1}{2} - \frac{n}{2p} \right) \right) <+\infty.
\end{eqnarray}
Combining inequalities \eqref{StabIne}, \eqref{Stab11} and \eqref{Stab12}, we obtain the exponential stability \eqref{expStable}. Our proof is complete.
\end{proof}

\end{document}